\theoremstyle{plain}
\newtheorem{theorem}{Theorem}[section]
\newtheorem{proposition}[theorem]{Proposition}
\newtheorem{lemma}[theorem]{Lemma}
\newtheorem{corollary}[theorem]{Corollary}
\numberwithin{equation}{section}
\theoremstyle{definition}
\newtheorem{definition}[theorem]{Definition}
\newtheorem{remark}[theorem]{Remark}
\newtheorem{example}[theorem]{Example}
\newtheorem{question}[theorem]{Question}
\newtheorem{conjecture}[theorem]{Conjecture}
\newcommand{\C}{\mathbb{C}}
\newcommand{\Z}{\mathbb{Z}}
\newcommand{\CP}{\mathbb{C}P}
\renewcommand{\a}{\mathbf{a}}
\renewcommand{\r}{\mathbf{r}}
\newcommand{\s}{\mathbf{s}}
\newcommand{\h}{\mathbf{h}}
\renewcommand{\1}{\mathbf{1}}
\newcommand{\balpha}{\boldsymbol{\alpha}}
\newcommand{\bbeta}{\boldsymbol{\beta}}
\newcommand{\Pic}{\operatorname{Pic}}
\DeclareMathOperator{\wed}{wed}
\DeclareMathOperator{\link}{Lk}
\DeclareMathOperator{\proj}{Proj}
\def\red#1{{\textcolor{red}{#1}}}
\def\blue#1{\textcolor{blue}{#1}}
    \tikzstyle{v}=[circle, draw, solid, fill, inner sep=0pt, minimum width=4pt]
\begin{document}
\title[Wedge operations and torus symmetries II]{Wedge operations and torus symmetries II}

\author[S.Choi]{Suyoung Choi}
\address{Department of Mathematics, Ajou University, 206, World cup-ro, Yeongtong-gu, Suwon, 443-749, Republic of Korea}
\email{schoi@ajou.ac.kr}

\author[H.Park]{Hanchul Park}
\address{School of Mathematics, Korea Institute for Advanced Study (KIAS), 85 Hoegiro Dongdaemun-gu, Seoul 130-722, Republic of Korea}
\email{hpark@kias.re.kr}

\thanks{The first author was supported by Basic Science Research Program through the National Research Foundation of Korea(NRF) funded by the Ministry of Science, ICT \& Future Planning(NRF-2012R1A1A2044990).}

\date{\today}

\subjclass[2010]{14M25, 57S25, 52B11, 13F55, 18A10}

\keywords{puzzle, toric variety, simplicial wedge, characteristic map}


\begin{abstract}
    A fundamental idea in toric topology is that classes of manifolds with well-behaved torus actions (simply, toric spaces) are classified by pairs of simplicial complexes and (non-singular) characteristic maps. {The authors in their previous paper provided} a new way to find all characteristic maps on a simplicial complex $K(J)$ obtainable by a sequence of wedgings from $K$. The main idea was that characteristic maps on $K$ theoretically determine all possible characteristic maps on a wedge of $K$.

    In this work, we further develop our previous work for classification of toric spaces. For a star-shaped simplicial sphere $K$ of dimension $n-1$ with $m$ vertices, the Picard number $\Pic(K)$ of $K$ is $m-n$. We refer to $K$ a \emph{seed} if $K$ cannot be obtained by wedgings. First, we show that, for a fixed positive integer $\ell$, there are at most finitely many seeds of Picard number $\ell$ supporting characteristic maps. As a corollary, the conjecture proposed by  V.~V.~Batyrev in 1991 is solved affirmatively.

    Second, we investigate a systematic method to find all characteristic maps on $K(J)$ using combinatorial objects called (realizable) \emph{puzzles} that only depend on a seed $K$.
    These two facts lead to a practical way to classify the toric spaces of fixed Picard number.
\end{abstract}
\maketitle

\tableofcontents
\section{Introduction}
    A \emph{toric variety} of dimension $n$ is a normal algebraic variety with an algebraic action of torus $(\C^\ast)^n$ with a dense orbit.
    A compact smooth toric variety is called a \emph{toric manifold}.
    One of the most important results for toric varieties, known as the \emph{fundamental theorem for toric geometry}, is that there is a bijection between the family of toric varieties and the family of \emph{fans}.
    In particular, each toric manifold corresponds to a complete non-singular fan.
    A complete non-singular fan can be regarded as a pair of a star-shaped simplicial sphere and the data of rays satisfying the non-singularity condition.
    Such a pair is called a fan-giving non-singular $\Z$-\emph{characteristic map} (simply, \emph{characteristic map}).
    Not only toric manifolds but also the classes of manifolds equipped with well-behaved torus actions (simply, \emph{toric spaces}) are also classified by the corresponding characteristic maps.
    The family of toric spaces includes toric manifolds and contains several categories such as quasitoric manifolds and topological toric manifolds.
    Furthermore, there are real analogues of toric spaces which are classified by $\Z_2$-characteristic maps.
    We refer to the introduction of \cite{CP13} for the summary of these classifications.

    It is natural to ask for an explicit classification of $\Z$- or $\Z_2$-characteristic maps in each category, although it is very complicated and, to date, only a few cases have been classified.
    Because the family of toric spaces is too large to handle, one reasonable approach would be to restrict our attention to a family of simplicial spheres with a few vertices.

    Let $K$ be a star-shaped simplicial sphere of dimension $n-1$ with $m$ vertices.
    The \emph{Picard number} $\Pic(K)$ of $K$ is defined by $\Pic(K):=m-n$.
    If $\Pic(K)=1$, then $K$ is the boundary complex of the $n$-simplex.
    It is known that $\CP^n$ is the only toric space supported by $K$.
    If $\Pic(K)=2$, then $K$ is the join of boundaries of two simplices (see \cite{Gru03}), and all characteristic maps over $K$ have been classified by Kleinschmidt \cite{K}.
    The classification of characteristic maps corresponding to toric manifolds over $K$ with $\Pic(K)=3$ is attributable to Batyrev \cite{Ba}, who used the fact that every toric manifold of Picard number $3$ is projective in his proof.
    Since a non-projective toric manifold of Picard number $4$ was constructed by Oda \cite{oda88}, Batyrev's method is not applicable to larger Picard numbers.

    Our previous research \cite{CP13} established a new way to classify characteristic maps in each category as follows.
    Let $K$  be a star-shaped simplicial sphere with $m$ vertices and fix a vertex $v$.
    Consider a 1-simplex $I$ whose vertices are labeled $v_{1}$ and $v_{2}$ and denote by $\partial I$ the 0-skeleton of $I$.
    Now, let us define a new simplicial complex on $m+1$ vertices, called the \emph{(simplicial) wedge} (or \emph{wedging}) of $K$ at $v$, denoted by $\wed_{v}(K)$, as
    \[ \wed_{v}(K)= (I \star \link_K\{v\}) \cup (\partial I \star (K\setminus\{v\})), \]
    where $K\setminus\{v\}$ is the induced subcomplex with $m-1$ vertices except $v$, the $\link_K\{v\}$ is the link of $v$ in $K$,
    and $\star$ is the join operation of simplicial complexes. {One notes that the resulting simplicial complex, denoted by $K(J)$, obtained from $K$ by a sequence of wedgings is determined by a positive integer tuple $J=(j_1, \ldots, j_m)$.
    Details will be given in Section~\ref{sec:wedge_seed}.}

    Let $(K,\lambda)$ be a characteristic map of dimension $n$ and $\sigma$ a face of $K$. 
    Then a characteristic map $(\link_K\sigma, \proj_\sigma \lambda)$, refered to as the \emph{projected characteristic map}, is defined by the following map
    \[
        (\proj_\sigma \lambda )(v) = [\lambda(v)]\in \Z^n/\langle \lambda(w)\mid w\in\sigma\rangle \cong \Z^{n - |\sigma|}.
    \]
    The authors \cite{CP13} showed that $\lambda$ is uniquely determined by the projections $\proj_{v_1}\lambda$ and $\proj_{v_2}\lambda$, see Proposition~\ref{prop:wedge1}.
    In other words, {roughly speaking}, characteristic maps on $K$ theoretically determine all possible characteristic maps on a wedge of $K$ in each category.
    We note that the wedge operation preserves the Picard number.
    Using this method, they succeeded in reproving Batyrev's classification without using the projectivity of toric manifolds.

    However, the following difficulties remain in terms of applying this method to general cases:
    \begin{enumerate}
      \item  A star-shaped simplicial sphere is called a \emph{seed} if it cannot be written as a simplicial wedge.
        In order to find all characteristic maps over $K$ with a fixed Picard number, we have to consider all seeds which support characteristic maps in each category.
        The number of seeds supporting characteristic maps should be sufficiently small for the method in \cite{CP13} to be practical.

      \item Even if we know every characteristic map over $K$, it is not easy to find all characteristic maps over {$\wed_v(K)$} because there exists a pair $(\lambda_1,\lambda_2)$ of characteristic maps over $K$ such that there is no $\lambda$ over $\wed_v(K)$ such that $\proj_{v_1} \lambda = \lambda_1$ and $\proj_{v_2} \lambda = \lambda_2$.
    Hence, to achieve our goal, i.e., {to classify all characteristic maps on $K(J)$ for any $J$}, we would have to determine which pair $(\lambda_1,\lambda_2)$ of characteristic maps over $K$ produces a characteristic map $\lambda$ over $\wed_v(K)$,
    and we would have to repeat this procedure for every stage in the sequence of wedgings.
    This would be very hard or practically impossible.
    \end{enumerate}

    The work we present in this paper aims to resolve the above two problems. Firstly, in Theorem~\ref{thm:finiteness_of_seeds}, we show that, for a fixed number $\ell$, there are at most finitely many seeds of Picard number $\ell$ supporting characteristic maps.
    Secondly, in Corollary~\ref{cor:main_theorem}, we investigate a systematic way to find all characteristic maps on $K(J)$ using \emph{puzzles} that only depend on a seed $K$. For each $J=(j_1, \ldots, j_m)$, an $R$-characteristic map over $K(J)$ corresponds to a color-preserving graph homomorphism, called a \emph{{realizable} puzzle}, from $G(J)$ to $D'(K)$ satisfying that the image of every square-shaped subgraph of $G(J)$ is {realizable}, where $R$ is either $\Z$ or $\Z_2$, $G(J)$ is the $1$-skeleton of a product of simplices $\prod_{i=1}^m \Delta^{j_i-1}$ with the specific coloring, and $D'(K)$ is the pre-diagram of $K$ which contains all $R$-characteristic maps over $K$ and the information of pairs $\{ \lambda_1, \lambda_2\}$ having $\lambda$ such that $\proj_{v_1} \lambda = \lambda_1$ and $\proj_{v_2} \lambda = \lambda_2$. The pre-diagram $D'(K)$ for $K$ equipped with the set of {realizable} squares is called the \emph{diagram} of $K$ and is denoted by $D(K)$. 
    {For given $K$ and $J$, we only need to find realizable puzzles from $G(J)$ to $D'(K)$ and this would not involve the repetitive tasks in (2).}
    The {precise} combinatorial interpretation of realizable puzzles will be given in Section~\ref{sec:example_of_puzzles}.
    These two facts present a practical way to classify the toric spaces of fixed Picard number.

    In addition, it is worthy to remark that Theorem~\ref{thm:finiteness_of_seeds} easily implies the main conjecture proposed by Batyrev \cite{Ba} whose proof can be found in Corollary~\ref{rem:proofofconj}. Let $\Sigma$ be a complete fan and let $G(\Sigma)$ be the set of all generators of rays of $\Sigma$.
    \begin{conjecture}[Conjecture~7.1 \cite{Ba}]\label{conj:batyrev}
        For any $n$-dimensional complete non-singular fan $\Sigma$ with Picard number $\ell$, there exists a constant $N(\ell)$ depending only on $\ell$ such that the number of primitive collections in $G(\Sigma)$ is always not more than $N(\ell)$.
    \end{conjecture}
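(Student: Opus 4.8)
\smallskip
\noindent\textbf{Proof proposal for Conjecture~\ref{conj:batyrev}.}
The plan is to translate the conjecture into the language of simplicial complexes and then read it off Theorem~\ref{thm:finiteness_of_seeds}. A complete non-singular fan $\Sigma$ of dimension $n$ with Picard number $\ell$ is precisely a $\Z$-characteristic map $(K,\lambda)$ for a star-shaped simplicial sphere $K$ of dimension $n-1$ with $m=n+\ell$ vertices, so that $\Pic(K)=\ell$; under this correspondence the elements of $G(\Sigma)$ are the vertices of $K$ and the cones of $\Sigma$ are the faces of $K$. Hence a primitive collection of $G(\Sigma)$ --- a subset that is not the ray set of a cone but all of whose proper subsets are --- is exactly a \emph{minimal non-face} of $K$. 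It therefore suffices to bound, by a constant depending only on $\ell=\Pic(K)$, the number of minimal non-faces of a star-shaped simplicial sphere $K$ that supports a $\Z$-characteristic map.

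The key lemma I would establish is that the number of minimal non-faces is a wedge invariant: if $K' = \wed_v(K)$, then $K$ and $K'$ have the same number of minimal non-faces. From the defining formula $\wed_v(K)=(I\star\link_K\{v\})\cup(\partial I\star(K\setminus\{v\}))$ one checks that a minimal non-face of $K$ avoiding $v$ stays a minimal non-face of $K'$, that a minimal non-face $M\ni v$ of $K$ gives the minimal non-face $(M\setminus\{v\})\cup\{v_1,v_2\}$ of $K'$, and, conversely, that every minimal non-face of $K'$ is of one of these two types --- in particular, none of them contains exactly one of $v_1, v_2$. Iterating, $K$ and $K(J)$ have the same number of minimal non-faces for every positive integer tuple $J$.

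To finish: given any complete non-singular fan $\Sigma$ with Picard number $\ell$, write its underlying complex as $\widehat K$. If $\widehat K$ is not a seed, it is a wedge, and since $\link_{\wed_v(K)}\{v_1\}\cong K$ (a standard property of the simplicial wedge) we may pass to a complex with strictly fewer vertices; repeating, after finitely many steps we reach a seed $K_0$ with $\widehat K = K_0(J)$ for some $J$. By the projected characteristic map (Proposition~\ref{prop:wedge1} and the surrounding discussion), $K_0$ still supports a $\Z$-characteristic map, and $\Pic(K_0)=\ell$ since wedging preserves the Picard number. By Theorem~\ref{thm:finiteness_of_seeds} there are only finitely many such seeds $K_1,\dots,K_s$; put $N(\ell):=\max_i\#\{\text{minimal non-faces of }K_i\}$. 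Then the number of primitive collections of $G(\Sigma)$ equals the number of minimal non-faces of $\widehat K = K_0(J)$, which by the lemma equals that of $K_0$, hence is at most $N(\ell)$.

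The only step that is not a formal dictionary translation is the wedge-invariance lemma; it is a short but slightly fiddly bookkeeping argument on the faces of $\wed_v(K)$, and I expect getting it precisely right to be the main point (it is most naturally recorded in Section~\ref{sec:wedge_seed}, alongside the construction of $K(J)$). Once it is in hand, the conjecture follows immediately from Theorem~\ref{thm:finiteness_of_seeds} with the explicit $N(\ell)$ above.
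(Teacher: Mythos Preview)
Your proposal is correct and follows essentially the same route as the paper's proof of Corollary~\ref{rem:proofofconj}: identify primitive collections with minimal non-faces, observe that wedging preserves their number, and invoke Theorem~\ref{thm:finiteness_of_seeds}. The only cosmetic difference is that the paper reads off the wedge-invariance of the number of minimal non-faces directly from the explicit minimal-non-face description of $K(J)$ given in Section~\ref{sec:wedge_seed}, rather than verifying it from the link/join formula as you outline; your additional remarks on why the seed $K_0$ still supports a characteristic map and why $\Pic(K_0)=\ell$ simply make explicit what the paper leaves implicit.
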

    In his paper, Batyrev needed the projectivity for the classification of toric manifolds of Picard number 3. But his conjecture is for general case, as we did in this paper.

\section{Wedge operation and seed} \label{sec:wedge_seed}
    First of all, let us recall some notions about simplicial complexes. Recall that for a face $\sigma$ of a simplicial complex $K$, the \emph{link} of $\sigma$ in $K$ is the subcomplex
    \[
         \link_K\sigma := \{ \tau \in K \mid \sigma\cup\tau\in K,\;\sigma\cap\tau=\varnothing\}
    \]
    and the \emph{simplicial join} of two disjoint simplicial complexes $K_1$ and $K_2$ is defined by
    \[
        K_1 \star K_2 = \{ \sigma_1 \cup \sigma_2 \mid \sigma_1 \in K_1,\; \sigma_2 \in K_2\}.
    \]
    Let $K$ be a simplicial complex with vertex set $[m]=\{1,2,\ldots,m\}$ and fix a vertex $v$ in $K$. Consider a 1-simplex $I$ whose vertices are ${v_1}$ and ${v_2}$ and denote by $\partial I = \{v_1,\,v_2\}$ the 0-skeleton of $I$. Now, let us define a new simplicial complex on $m+1$ vertices, called the \emph{(simplicial) wedge} of $K$ at $v$, denoted by $\wed_{v}K$, by
    \[
        \wed_{v}K= (I \star \link_K\{v\}) \cup (\partial I \star (K\setminus\{v\})),
    \]
    where $K\setminus\{v\}$ is the induced subcomplex with $m-1$ vertices except $v$. See Figure~\ref{fig:wedge}.

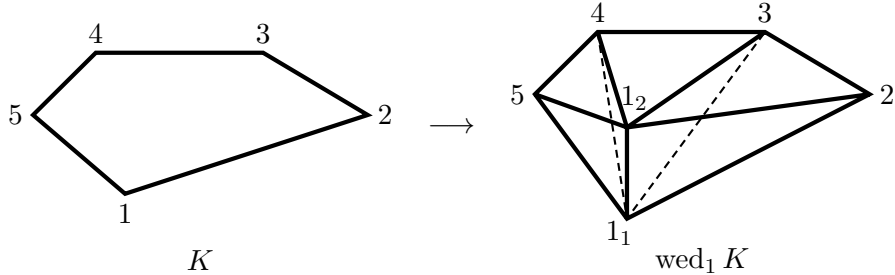
\begin{figure}[h]
    \begin{tikzpicture}[scale=.55]
        \coordinate [label=below:$1$](11) at (-9.8,0.6);
        \coordinate [label=right:$2$](22) at (-4,2.5);
        \coordinate [label=above:$3$](33) at (-6.5,4);
        \coordinate [label=above:$4$](44) at (-10.5,4);
        \coordinate [label=left:$5$](55) at (-12,2.5);
        \draw (-2,2.2) node {$\longrightarrow$};
        \coordinate [label={[xshift=-2.8pt,yshift=2.8pt]below:$1_1$}](1_1) at (2.2,0);
        \coordinate [label={[xshift=2.8pt,yshift=2.8pt]above:$1_2$}](1_2) at (2.2,2.2);
        \coordinate [label=right:$2$](2) at (8,3);
        \coordinate [label=above:$3$](3) at (5.5,4.5);
        \coordinate [label=above:$4$](4) at (1.5,4.5);
        \coordinate [label=left:$5$](5) at (0,3);
        \draw (-8,-1) node {$K$}
            (4,-1) node{$\wed_1K$};
        \draw [ultra thick] (11)--(22)--(33)--(44)--(55)--cycle
            (1_1)--(2)--(3)--(4)--(5)--cycle
            (1_2)--(1_1)
            (1_2)--(2)
            (1_2)--(3)
            (1_2)--(4)
            (1_2)--(5);
        \draw [thick, densely dashed] (4)--(1_1)--(3);
    \end{tikzpicture}
    \caption{Illustration of a wedge of $K$}\label{fig:wedge}
\end{figure}

    The operation itself is called the \emph{simplicial wedge operation} or \emph{(simplicial) wedging}.  Another description of simplicial wedging can be given using minimal non-faces. A subset $\sigma$ of the vertex set of $K$ is called a \emph{minimal non-face} of $K$ if $\sigma \notin K$ but every proper subset of $\sigma$ is a face of $K$. Note that every simplicial complex is determined by its minimal non-faces. Let $J=(j_1, \ldots, j_m)\in \Z_+^m$ be a vector of positive integers. Denote by $K(J)$ the simplicial complex on vertices
    \[
        \{ \underbrace{1_1,1_2,\ldots,1_{j_1}},\underbrace{{2_1},2_2,\ldots,{2_{j_2}}},\ldots, \underbrace{{m_1},\ldots,{m_{j_m}}} \}
    \]
    with minimal non-faces
    \[
        \{ \underbrace{{(i_1)_1},\ldots,{(i_1)_{j_{i_1}}}},\underbrace{{(i_2)_1},\ldots,{(i_2)_{j_{i_2}}}},\ldots, \underbrace{{(i_k)_1},\ldots,{(i_k)_{j_{i_k}}}} \}
    \]
    for each minimal non-face $\{{i_1},\ldots,{i_k}\}$ of $K$. It is an easy fact that $\wed_{i}K = K(J)$ where $J=(1,\ldots,1,2,1,\ldots,1)$ is the $m$-tuple with 2 as the $i$th entry.
    It should be noted that $K(J)$ can also be obtained from $K$ by a sequence of wedgings.
    See \cite{BBCG10} or \cite{CP13} for details.


    Let $R$ be the ring $\Z$ or $\Z_2=\Z/2\Z$. {Henceforth, let us assume that $K$ is an $(n-1)$-dimensional star-shaped simplicial sphere whose vertex set is $[m]$.} In this paper, we call a finite set of vectors $B=\{v_1,\dotsc, v_n\}\subset R^n$ an \emph{$R$-basis} or simply a \emph{basis} of $R^n$ if
    \begin{enumerate}
        \item $R=\Z$ and $B$ is unimodular in $\Z^n$,
        \item or $R=\Z_2$ and $B$ is {linearly independent in}
        $\Z_2^n$.
    \end{enumerate}
    \begin{definition}
        A map $\lambda\colon [m] \to R^n$ is a (non-singular) $R$-\emph{characteristic map} over $K$, or simply a \emph{characteristic map}, if the following holds:
        \begin{equation}\label{eq:nonsingular}
            \text{if $\{i_1,\dotsc,i_n\}\in K$, then $\lambda(i_1),\dotsc,\lambda(i_n)$ forms an $R$-basis.}\tag{$\ast$}
        \end{equation}
        The condition \eqref{eq:nonsingular} is known as the \emph{non-singularity condition}.
    \end{definition}

    Every $\Z$-characteristic map $\lambda\colon [m]\to \Z^n$ induces a $\Z_2$-characteristic map given by the composition $[m] \to \Z^n \to \Z_2^n$, where the map $\Z^n \to \Z_2^n$ is the natural modulo map. This new characteristic map is frequently called the \emph{mod 2 reduction} of $\lambda$.

    According to Section~7.5 (more specifically, Corollary~7.33 and Proposition~7.34) of \cite{BP}, one concludes:
    \begin{theorem}\label{thm:lambdaphi}
        The following are equivalent.
        \begin{enumerate}
            \item $K$ admits an $R$-characteristic map.
            \item {There exists a map $\phi\colon [m] \to R^{m-n}$ such that for every maximal face $\{i_1,\dotsc,i_n\}\in K$, $\{\phi(i)\mid 1\le i \le m,\, i \ne i_k\text{ for } 1\le k \le n\}$ is an $R$-basis.}
        \end{enumerate}
    \end{theorem}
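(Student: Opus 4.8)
The plan is to translate both conditions into matrix language and recognize the equivalence as an instance of \emph{Gale duality} over the ring $R$. Record $\lambda\colon[m]\to R^n$ by the $n\times m$ matrix $\Lambda$ whose $i$-th column is $\lambda(i)$, and record $\phi\colon[m]\to R^{m-n}$ by an $(m-n)\times m$ matrix $\Phi$ in the same way; for $S\subseteq[m]$ write $\Lambda_S$ for the submatrix of $\Lambda$ on the columns indexed by $S$. Since a family of $n$ vectors in $R^n$ is an $R$-basis exactly when the corresponding square matrix lies in $GL_n(R)$ (determinant $\pm1$ when $R=\Z$, determinant $\neq0$ when $R=\Z_2$), the map $\lambda$ is a characteristic map over $K$ if and only if $\Lambda_S\in GL_n(R)$ for every maximal face $S$ of $K$, whereas condition (2) says precisely that $\Phi_{[m]\setminus S}\in GL_{m-n}(R)$ for every maximal face $S$ (note $|[m]\setminus S|=m-n$).

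The heart of the matter is a linear-algebra lemma. \emph{Suppose $\Lambda\colon R^m\to R^n$ is surjective and the rows of $\Phi$ form an $R$-basis of $\ker\Lambda$, i.e.\ the sequence
\[
0\longrightarrow R^{m-n}\xrightarrow{\ \Phi^{T}\ } R^m\xrightarrow{\ \Lambda\ } R^n\longrightarrow 0
\]
is exact. Then for every $S\subseteq[m]$ with $|S|=n$ one has $\Lambda_S\in GL_n(R)$ if and only if $\Phi_{[m]\setminus S}\in GL_{m-n}(R)$.} To prove it, note that the displayed sequence splits because $R^n$ is free, so with respect to the coordinate decomposition $R^m=R^S\oplus R^{[m]\setminus S}$ we are in the elementary situation of a split short exact sequence $0\to A\xrightarrow{f}B\xrightarrow{g}C\to0$ together with a decomposition $B=B_1\oplus B_2$: here $g|_{B_1}\colon B_1\to C$ is an isomorphism $\iff B=B_1\oplus\operatorname{im}f\iff$ the composite $A\xrightarrow{f}B\twoheadrightarrow B_2$ is an isomorphism, each condition being, by a one-line check on kernels and images, equivalent to $B_1\cap\operatorname{im}f=0$ and $B_1+\operatorname{im}f=B$. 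Taking $B_1=R^S$ and $B_2=R^{[m]\setminus S}$ identifies $g|_{B_1}$ with $\Lambda_S$ and $\operatorname{pr}_{B_2}\circ f$ with $(\Phi_{[m]\setminus S})^{T}$, which proves the lemma. (Over $R=\Z$ the only delicate point is that ``isomorphism of free $\Z$-modules of equal rank'' must be read as invertibility over $\Z$, i.e.\ unimodularity in the sense of the paper; this is why one argues with the split sequence rather than with ranks alone, and it is also what ensures $\ker\Lambda$ is free of rank $m-n$, so that $\Phi$ exists. For $R=\Z_2$ everything is a vector space and every surjection splits automatically.)

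Given the lemma the theorem follows quickly. Assume (1) and fix a characteristic map $\lambda$. As $K$ is a nonempty pure $(n-1)$-complex it has a maximal face $S$, and $\Lambda_S\in GL_n(R)$ shows the columns of $\Lambda$ indexed by $S$ already span $R^n$, so $\Lambda\colon R^m\to R^n$ is surjective; hence $\ker\Lambda$ is a free direct summand of $R^m$ of rank $m-n$, and a basis of it, taken as the rows of an $(m-n)\times m$ matrix $\Phi$, defines $\phi\colon[m]\to R^{m-n}$. The lemma converts ``$\Lambda_S\in GL_n(R)$ for every maximal face $S$'' into ``$\Phi_{[m]\setminus S}\in GL_{m-n}(R)$ for every maximal face $S$'', which is (2). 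Conversely, assume (2) and fix such a $\phi$. The basis condition on one maximal face forces $\Phi\colon R^m\to R^{m-n}$ to be surjective, so $\ker\Phi$ is free of rank $n$; take a basis of it as the rows of an $n\times m$ matrix $\Lambda$ and set $\lambda(i)$ to be the $i$-th column of $\Lambda$. Then $0\to R^n\xrightarrow{\Lambda^{T}}R^m\xrightarrow{\Phi}R^{m-n}\to0$ is exact, so the lemma applies with the roles of $\Lambda$ and $\Phi$ (and of $S$ and $[m]\setminus S$) exchanged and gives $\Lambda_S\in GL_n(R)$ for every maximal face $S$, i.e.\ $\lambda$ is a characteristic map over $K$, which is (1).

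The only genuine obstacle is the integral bookkeeping flagged above: verifying that the relevant kernels are free of the expected rank and that the minor-invertibility condition really is the unimodularity/basis condition of the statement. Both are handled by the splitting of the short exact sequence, available over $\Z$ because $R^n$ (resp.\ $R^{m-n}$) is free and trivially available over $\Z_2$; with that in place the argument is otherwise formal.
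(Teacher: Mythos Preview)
Your argument is correct: the Gale-duality lemma you prove (that for a short exact sequence $0\to R^{m-n}\xrightarrow{\Phi^T}R^m\xrightarrow{\Lambda}R^n\to0$ and any $n$-subset $S\subseteq[m]$, one has $\Lambda_S\in GL_n(R)$ iff $\Phi_{[m]\setminus S}\in GL_{m-n}(R)$) holds over any commutative ring by the kernel/image check you sketch, and your care with freeness of kernels over $\Z$ is exactly what is needed to construct the dual map in each direction.

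The paper, however, does not give its own proof of this statement at all: it simply records the result and cites Section~7.5 (Corollary~7.33 and Proposition~7.34) of Buchstaber--Panov. What you have written is in fact a clean, self-contained account of the linear algebra underlying those cited results---the passage between a characteristic map $\lambda$ and its ``dual'' $\phi$ via the exact sequence $0\to R^{m-n}\to R^m\to R^n\to 0$ is precisely the mechanism in Buchstaber--Panov. So your proof is not a different route so much as an unpacking of the reference the paper invokes; it supplies strictly more than the paper does here.
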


    Let us write $\Pic(K) := m-n$ and call it the \emph{Picard number of $K$}. Further, we fix a positive integer {$\ell$} and assume that $\Pic(K)=\ell$. We additionally assume that $K$ admits a $\Z_2$-characteristic map $\lambda$. If $m\ge 2^\ell$, the map $\phi$ of (2) of Theorem~\ref{thm:lambdaphi} cannot be one-to-one. Suppose that $v,w\in [m]$ and $\phi(v)=\phi(w)$. Then every facet of $K$ should contain either $v$ or $w$. We need {the following} lemma at this point.

    \begin{lemma}\label{lem:seed}
        Let $v$ and $w$ be distinct vertices of $K$. If every facet of $K$ contains either $v$ or $w$, then $K$ is isomorphic to $L \star \partial I$ or $\wed_v L$ for some simplicial complex $L$.
    \end{lemma}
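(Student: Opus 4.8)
The plan is to reduce the whole statement to one combinatorial fact:

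\emph{Claim.} If every facet of $K$ contains $v$ or $w$, then every minimal non-face of $K$ contains both $v$ and $w$, or neither of them.

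Granting the Claim, I split into two cases. If $\{v,w\}\notin K$, then (as $v,w$ are vertices) $\{v,w\}$ is itself a minimal non-face, and it is the only minimal non-face containing it, since any larger one would properly contain the non-face $\{v,w\}$; by the Claim every other minimal non-face avoids both $v$ and $w$. Hence the minimal non-faces of $K$ are $\{v,w\}$ together with those of the induced subcomplex $L$ on $[m]\setminus\{v,w\}$, and comparing minimal non-faces gives $K=L\star\partial I$ with $\partial I=\{v,w\}$. If instead $\{v,w\}\in K$, put $L:=\link_K\{w\}$. Applying the Claim to the set $\{u,w\}$ for a vertex $u\ne v,w$ shows $\{u,w\}\in K$, so $L$ has vertex set $[m]\setminus\{w\}$ and contains $v$; using the Claim one checks that the minimal non-faces of $L$ are the minimal non-faces $N$ of $K$ with $v,w\notin N$, together with the sets $N\setminus\{w\}$ for minimal non-faces $N$ of $K$ with $\{v,w\}\subseteq N$. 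Feeding this into the description of $\wed_v L$ by minimal non-faces from Section~\ref{sec:wedge_seed} (renaming the two new vertices $v$ and $w$) returns exactly the minimal non-faces of $K$, so $K\cong\wed_v L$.

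It remains to prove the Claim, which is the real content. We may assume $n\ge 2$, since for $n=1$ we have $m=2$ and $\{v,w\}$ is the unique minimal non-face. Suppose, for contradiction, that $N$ is a minimal non-face with $w\in N$ and $v\notin N$ (the case $v\in N$, $w\notin N$ is symmetric). Then $N':=N\setminus\{w\}$ is a face; extend it to a facet $F$. If $w\in F$ then $N=N'\cup\{w\}\subseteq F\in K$, contradicting $N\notin K$; hence $w\notin F$, and so $v\in F$ by hypothesis. Since $K$ is a simplicial sphere it is a pseudomanifold, so it is pure of dimension $n-1$ and the $(n-2)$-face $F\setminus\{v\}$ lies in exactly two facets; let $F'\ne F$ be the other one. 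As $v\notin F\setminus\{v\}$ and $|F'|=|F|=n$, we cannot have $v\in F'$ (that would force $F'=F$), so $v\notin F'$ and therefore $w\in F'$. But $N'\subseteq F$ and $v\notin N'$, so $N'\subseteq F\setminus\{v\}\subseteq F'$, whence $N=N'\cup\{w\}\subseteq F'\in K$ — again a contradiction.

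The step I expect to be the main obstacle is precisely this pseudomanifold argument: recognizing that crossing a codimension-one face of a facet that contains $v$ but not $w$ must land in a facet containing $w$ but not $v$ is exactly what forces minimal non-faces into the ``both-or-neither'' pattern, and thereby pins the global structure down to a join with $\partial I$ or a simplicial wedge. Everything after the Claim is routine bookkeeping with minimal non-faces and the wedging recipe of Section~\ref{sec:wedge_seed}; the only point needing a little care is to confirm that the complex $L$ one writes down genuinely has the expected vertex set, which (as indicated above) is again a consequence of the Claim.
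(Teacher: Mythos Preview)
Your proof is correct and follows essentially the same route as the paper. The paper phrases the key step as the equivalence $\sigma\in K \Leftrightarrow \sigma\cup\{v\}\in K \Leftrightarrow \sigma\cup\{w\}\in K$ for $\sigma$ avoiding $v,w$, while you phrase it as the dual statement about minimal non-faces; both are proved by the identical pseudomanifold argument (cross the ridge $F\setminus\{v\}$ to the unique other facet, which must then contain $w$), and you have additionally spelled out the bookkeeping that the paper leaves as ``one easily concludes.''
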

    \begin{proof}
        Let $\sigma$ be a subset of the vertex set of $K$, $v\notin \sigma$ and $w \notin \sigma$. Then we claim that
        \[
            \sigma\in K \Leftrightarrow \sigma\cup\{v\} \in K \Leftrightarrow \sigma\cup\{w\} \in K.
        \]
        To prove the claim, suppose that $\sigma \in K$.
        {Then,}
         we have a facet $\tau$ of $K$ such that $\sigma\subseteq \tau$. If $\tau$ contains both $v$ and $w$, it is done. Otherwise, without loss of generality, we can assume that $v\in\tau$. Then $\tau \cup \{w\} \setminus\{v\}$ is also a facet of $K$ since $K$ is a pseudomanifold and there are exactly two facets of $K$ which are supersets of the $(n-2)$-simplex $\tau\setminus\{v\}$. Therefore the claim is proven. Considering the minimal non-faces of $K$, one easily concludes that $K$ is a suspension if {$\{v,w\} \not\in K$,} or $K$ is a wedge otherwise.
    \end{proof}
%
%
    \begin{definition}
        A star-shaped simplicial sphere is called a \emph{seed} if it cannot be written as a simplicial wedge.
    \end{definition}

    There are certainly a lot of seeds in the family of star-shaped simplicial spheres: for example, a flag simplicial complex cannot be a wedge. In spite of this, the following theorem says that ``good'' seeds are very rare in a sense of toric topology.

{
    \begin{theorem} \label{thm:finiteness_of_seeds}
        For a fixed positive integer $\ell$, if a seed $K$ of Picard number $\ell$ admits a characteristic map, then $m \leq 2^\ell - 1$, where $m$ is the number of vertices of $K$. As a corollary, there are only finitely many seeds with Picard number $\ell$ which admit a characteristic map.
    \end{theorem}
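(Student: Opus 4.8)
The plan is to run the pigeonhole argument that is already set up in the paragraph preceding Lemma~\ref{lem:seed}, and then to absorb the ``suspension'' alternative of that lemma into an induction on $\ell$.

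First I would pass to $\Z_2$ and to the dual map. A $\Z$-characteristic map over $K$ reduces mod $2$ to a $\Z_2$-characteristic map over $K$, and both ``being a seed'' and the Picard number are insensitive to the coefficient ring, so it is enough to start from a $\Z_2$-characteristic map. Theorem~\ref{thm:lambdaphi}(2) then produces a map $\phi\colon[m]\to\Z_2^{m-n}=\Z_2^{\ell}$ such that for every facet $F$ of $K$ the $\ell$ vectors $\{\phi(i)\mid i\notin F\}$ form a $\Z_2$-basis of $\Z_2^{\ell}$. The key preliminary observation is that $\phi$ is nowhere zero: if $\phi(v)=0$, then every facet of $K$ would have to contain $v$ (otherwise the complement of a facet missing $v$ would include the zero vector and could not be a basis), so $K=\{v\}\star\link_K\{v\}$ would be a cone, which a star-shaped simplicial sphere with at least one vertex never is (and here $m\ge\ell\ge1$, so $K$ has a vertex). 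Hence $\phi$ takes values in $\Z_2^{\ell}\setminus\{0\}$, a set of cardinality $2^{\ell}-1$.

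Now assume for contradiction that $m\ge 2^{\ell}$. Since $2^{\ell}-1<m$, the map $\phi$ cannot be injective, so $\phi(v)=\phi(w)$ for some distinct $v,w\in[m]$. No facet of $K$ can avoid both $v$ and $w$, for then its complement would contain the vector $\phi(v)=\phi(w)$ with multiplicity two and hence could not be a basis; thus every facet of $K$ contains $v$ or $w$, and Lemma~\ref{lem:seed} gives $K\cong\wed_v L$ or $K\cong L\star\partial I$ for some simplicial complex $L$. The first possibility is excluded because $K$ is a seed, so $K\cong L\star\partial I$.

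It remains to treat the suspension case, and this is the step I expect to require the most care, since $L\star\partial I$ is not literally a simplicial wedge. I would argue by induction on $\ell$. The factor $L$ is again a star-shaped simplicial sphere with $\Pic(L)=\Pic(K)-\Pic(\partial I)=\ell-1$; it carries a $\Z_2$-characteristic map (project the data for $K$ by the $\phi$-value on a vertex of the $\partial I$ factor, or apply Theorem~\ref{thm:lambdaphi} to $L$ directly); and it is again a seed, because if $L=\wed_v M$ then, using the identity $\wed_v(M\star\partial I)=(\wed_v M)\star\partial I$ (checked on minimal non-faces), $K$ would be the wedge $\wed_v(M\star\partial I)$, a contradiction. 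The inductive hypothesis then bounds the number of vertices of $L$ by $2^{\ell-1}-1$, whence $K$ has at most $2^{\ell-1}+1\le 2^{\ell}-1$ vertices, contradicting $m\ge 2^{\ell}$; the small base cases, in which the extremal seeds are iterated joins $\partial I\star\cdots\star\partial I$ of Picard number at most $2$ already classified in the introduction, are handled directly. This gives $m\le 2^{\ell}-1$, and the finiteness corollary is then immediate, since there are only finitely many simplicial complexes on at most $2^{\ell}-1$ vertices, hence only finitely many seeds of Picard number $\ell$ admitting a characteristic map.
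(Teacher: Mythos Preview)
Your argument is correct and follows essentially the same route as the paper: reduce to $\Z_2$, invoke the dual map $\phi$ from Theorem~\ref{thm:lambdaphi}, use pigeonhole to force a collision $\phi(v)=\phi(w)$, apply Lemma~\ref{lem:seed} and the seed hypothesis to land in the suspension case $K\cong L\star\partial I$, and finish by induction on $\ell$. The paper's proof is terser (it simply asserts that $L$ is again a seed and that the inductive hypothesis applies), whereas you spell out the supporting details: that $\phi$ avoids $0$ (which patches the edge case $m=2^{\ell}$ in the pigeonhole step), that $L$ inherits a characteristic map via projection, and that $L$ is a seed via the identity $\wed_v(M\star\partial I)=(\wed_v M)\star\partial I$ on minimal non-faces. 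These are welcome clarifications but do not alter the strategy.
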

}
    \begin{proof}
{
        It is enough to prove the theorem for $R=\Z_2$. If $K$ admits a $\Z$-characteristic map, then its mod 2 reduction will work.
        We shall use an induction on $\ell$. It can easily be seen that the theorem holds for $\ell \leq 2$. Assume that it holds for a seed of Picard number $\ell-1$;  a seed of Picard number $\ell-1$ has at most $2^{\ell-1}-1$ vertices.
}

{
        Suppose $m \geq 2^\ell$. Since $K$ is a seed, by Lemma~\ref{lem:seed}, $K=L \ast \partial I$ for some simplicial complex $L$. We note that $L$ is also a seed. The number of vertices of $L$ is $m-1$, and $\Pic(L) = \Pic(K) -1 = \ell-1$.
        Since $m-1 \geq 2^\ell - 1 \geq 2^{\ell-1}$, by induction hypothesis, it is a contradiction.
}
    \end{proof}

    The above theorem easily gives an affirmative solution to Conjecture~\ref{conj:batyrev} originally proposed by Batyrev \cite{Ba} in 1991.
    \begin{corollary}\label{rem:proofofconj}
        For any $n$-dimensional complete non-singular fan $\Sigma$ with Picard number $\ell$, there exists a constant $N(\ell)$ depending only on $\ell$ such that the number of primitive collections in $G(\Sigma)$ is always not more than $N(\ell)$.
    \end{corollary}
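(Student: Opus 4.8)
The plan is to deduce the statement from Theorem~\ref{thm:finiteness_of_seeds} by translating it into the language of star-shaped simplicial spheres and characteristic maps. By the fundamental theorem of toric geometry, an $n$-dimensional complete non-singular fan $\Sigma$ of Picard number $\ell$ is equivalent to a pair $(K,\lambda)$ in which $K$ is an $(n-1)$-dimensional star-shaped simplicial sphere with $m=n+\ell$ vertices and $\lambda$ is a $\Z$-characteristic map over $K$: the rays of $\Sigma$ are the vertices of $K$, the cones of $\Sigma$ are the faces of $K$, and $\Pic(K)=\ell$. Under this correspondence a primitive collection of $G(\Sigma)$ — a subset of rays spanning no cone of $\Sigma$ all of whose proper subsets span cones of $\Sigma$ — is exactly a minimal non-face of $K$. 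Hence it suffices to bound, by a constant depending only on $\ell$, the number of minimal non-faces of a star-shaped simplicial sphere of Picard number $\ell$ which admits a characteristic map.

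First I would de-wedge $K$. A star-shaped simplicial sphere that is not a seed is, by definition, a simplicial wedge of a star-shaped simplicial sphere with strictly fewer vertices (see \cite{CP13}), so iterating this produces after finitely many steps a seed $K_0$ with $K=K_0(J)$ for some $J=(j_1,\dots,j_{m_0})\in\Z_+^{m_0}$, where $m_0$ is the number of vertices of $K_0$; moreover $\Pic(K_0)=\Pic(K)=\ell$, since wedging preserves the Picard number. Next I would check that $K_0$ still admits a characteristic map. Passing from $\wed_v L$ down to $L$ corresponds, on the level of characteristic maps, to the projection $\proj_{v_1}$: one has $\link_{\wed_v L}\{v_1\}\cong L$ (with $v_2$ in the role of $v$), and a projected characteristic map is again a characteristic map, so iterating these projections turns $\lambda$ into a characteristic map over $K_0$. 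Theorem~\ref{thm:finiteness_of_seeds} now yields $m_0\le 2^{\ell}-1$.

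Finally I would count minimal non-faces. By the defining description of $K(J)$ recalled in Section~\ref{sec:wedge_seed}, the assignment sending a minimal non-face $\{i_1,\dots,i_k\}$ of $K_0$ to $\{(i_1)_1,\dots,(i_1)_{j_{i_1}},\dots,(i_k)_1,\dots,(i_k)_{j_{i_k}}\}$ is a bijection between the minimal non-faces of $K_0$ and those of $K$. Therefore the number of primitive collections of $\Sigma$ equals the number of minimal non-faces of $K_0$, which is at most $2^{m_0}\le 2^{\,2^{\ell}-1}$. So $N(\ell):=2^{\,2^{\ell}-1}$ works; one may of course sharpen it to the maximum of the number of minimal non-faces over the finitely many seeds of Picard number $\ell$ that support a characteristic map.

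Granting Theorem~\ref{thm:finiteness_of_seeds}, this corollary is essentially bookkeeping, and the only points requiring care are the dictionary ``primitive collection $=$ minimal non-face'', the invariance of the number of minimal non-faces under wedging, and the claim that de-wedging preserves the existence of a characteristic map. I expect the last of these — tracking a characteristic map down the tower of de-wedgings — to be the main thing to pin down, but it follows directly from the projection construction recalled in the introduction and in \cite{CP13}.
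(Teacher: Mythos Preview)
Your proposal is correct and follows essentially the same route as the paper: identify primitive collections with minimal non-faces, observe that wedging preserves their number, de-wedge to a seed, and invoke Theorem~\ref{thm:finiteness_of_seeds}. Your write-up is in fact more careful than the paper's — you explicitly verify via projection that the de-wedged seed still supports a characteristic map (a point the paper leaves implicit) and you extract the explicit bound $N(\ell)=2^{\,2^{\ell}-1}$ — but these are elaborations of the same argument rather than a different approach.
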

    \begin{proof}
Observe that the primitive collections of a complete simplicial fan $\Sigma$ correspond one-to-one to the minimal non-faces of the underlying simplicial complex of $\Sigma$. Furthermore, the wedge operation does not change the number of minimal non-faces, reminding the definition of $K(J)$. By Theorem~\ref{thm:finiteness_of_seeds}, we have only finitely many seeds whose minimal non-faces to be counted. Thus an upper bound exists for given Picard number $\ell$.
     \end{proof}
\section{Puzzle and classification of toric spaces}
    In \cite{CP13} (especially in Section~4), the authors have studied classification of toric spaces over wedges of star-shaped simplicial spheres. In this section, we further improve Corollary~4.5 of \cite{CP13} and try to provide a combinatorial and systematic way to classify toric spaces over $K(J)$.

    A characteristic map $\lambda\colon [m] \to R^n$ can be represented by an $(n\times m)$-matrix
    \[
        \begin{pmatrix}
            \lambda(1) & \lambda(2) & \cdots & \lambda(m)
        \end{pmatrix},
    \]
    where $\lambda(1),\dotsc,\lambda(m)$ are column vectors. This matrix is called a \emph{characteristic matrix} and also frequently denoted by $\lambda$.
    \begin{definition}\label{def:D-J_equivalent}
        Two characteristic maps $\lambda_1,~\lambda_2 \colon [m] \to R^n$
        {are} said to be \emph{Davis-Januszkiewicz equivalent} or \emph{D-J equivalent} if they are the same up to change of basis of $R^n$. That is
        , $\lambda_1$ and $\lambda_2$ are D-J equivalent if one of their corresponding characteristic matrices can be changed to the other by finite applications of any of the following (called \emph{row operations}):
        \begin{enumerate}
            \item Multiply a row by $-1$.
            \item Add a multiple of one row to another row.
        \end{enumerate}
        The equivalence classes are called \emph{Davis-Januszkiewicz classes} or \emph{D-J classes}.
    \end{definition}
    \begin{remark}
        When $R=\Z$ or $R=\Z_2$, the above definition corresponds to a D-J equivalence of omnioriented topological toric manifolds or real topological toric manifolds, respectively. If $R= \Z_2$, then the row operation $(1)$ is redundant.
    \end{remark}

    {
    For a simplicial complex $K$, we denote by $V(K)$ the set of vertices of $K$.
    \begin{definition}
    Let us fix a face $\sigma \in K$. Let $\lambda\colon V(K) \to R^n$ be a map such that the vectors $\{\lambda(v)\}_{v\in\sigma}$ are unimodular. Then we define a map called the \emph{projection of $\lambda$ with respect to $\sigma$} by the following:
    \[
        (\proj_\sigma \lambda )(w) = [\lambda(w)]\in \Z^n/\langle \lambda(v)\mid v\in\sigma\rangle \cong \Z^{n - |\sigma|}
    \]
    for $w \in V(\link_{K}\sigma)$. The map $\proj_\sigma \lambda \colon \link_K\sigma \to \Z^{n-|\sigma|}$ is defined up to basis change of $\Z^{n-|\sigma|}$. When $(K,\lambda)$ is a characteristic map, then $(\link_K\sigma,\proj_\sigma \lambda)$ is also a characteristic map also called a \emph{projected characteristic map}. When $\sigma=\{v\}$ is a vertex, one can simply write $\proj_\sigma\lambda = \proj_v \lambda$.
    \end{definition}
    }

    {
    \begin{remark}\label{rem:projection}
    Fix a vertex $1$ of $K$ to be wedged at. To study toric spaces over $K(J)$, it is worth describing the projected characteristic map using matrices. The vertex set of its wedge $\wed_1K$ can be written as $\{1_1, 1_2, 2, \dotsc, m\}$. Let $\Lambda$ be a characteristic map over $\wed_1K$. After row operations, one can assume that $\Lambda(1_1)$ is a coordinate vector. In other words, the matrix $\Lambda$ can be written as
    \begin{equation}\label{eq:projection}
        \Lambda = \left(\begin{array}{c|ccc}
        1 &  a_1 & \cdots & a_m \\ \hline
        0 &      &        &  \\
        \vdots & &   \lambda    &  \\
        0 &      &        &
        \end{array}\right)_{(n+1)\times(m+1)}
    \end{equation}
    where the columns are labeled as $1_1, 1_2,2,\dotsc,m$. Then the characteristic matrix for $\proj_{1_1}\Lambda$ is the matrix $\lambda$, since the link $\link_{\wed_1K}\{1_1\}$ is naturally isomorphic to $K$.
    \end{remark}
    }

    {
    To define the pre-diagram, we choose one of the following categories of characteristic maps. Refer to \cite{CP13} for the definitions of positive orientedness and fan-givingness of characteristic maps.
    \begin{enumerate}
        \item The category of $\Z$-characteristic maps.
        \item The category of positively oriented $\Z$-characteristic maps.
        \item The category of fan-giving $\Z$-characteristic maps.
        \item The category of $\Z_2$-characteristic maps.
    \end{enumerate}
    In the next definition, we understand every characteristic map or D-J class to be an object in the chosen category. Note that each category corresponds to the following category of toric spaces:
    \begin{enumerate}
        \item The category of omnioriented topological toric manifolds.
        \item The category of almost complex topological toric manifolds.
        \item The category of toric manifolds.
        \item The category of real topological toric manifolds.
    \end{enumerate}
    }

    \begin{definition}\label{def:prediagram}
        The \emph{pre-diagram} of $K$, written as $D'(K)$, is an edge-colored pseudograph $(V,E)$ {satisfying}
        \begin{enumerate}
            \item the vertex set $V$ whose elements are the D-J classes over $K$, 
            \item the edge set $E$ is defined as follows: $E$ is the collection of the sets $\{\lambda_1,\lambda_2,v\}$ where $\lambda_1,\lambda_2 \in V$ and $v\in V(K)$ such that there exists a characteristic map over $\wed_vK$ whose two projections onto $K$ are $\lambda_1$ and $\lambda_2$. The element $\{\lambda_1,\lambda_2,v\}$ is called an \emph{edge} colored $v$ whose endpoints are $\lambda_1$ and $\lambda_2$.
        \end{enumerate}
    \end{definition}
    \begin{remark}
        For every D-J class $\lambda$ over $K$ and a vertex $v$ of $K$, there is a loop of $D'(K)$ starting at $\lambda$ and colored $v$ because every loop indicates a canonical extension (see (4.2) of \cite{CP13}). Hence the loops are usually omitted when a drawing of the pre-diagram is made.
    \end{remark}

    {
    The pre-diagram is easily computable. Let $\lambda\colon [m] \to R^n$ be a characteristic map and fix a number $1 \le v \le m$. Then we obtain a matrix $\operatorname{Pr}(\lambda,v)$ such as in Remark~\ref{rem:projection}. To be more precise, after applying suitable row operations to $\lambda$, we can assume that the $v$th column is a coordinate vector.
    Then
    \begin{equation}
        \lambda = \left(\begin{array}{c|ccc}
        1 &  a_2 & \cdots & a_m \\ \hline
        0 &      &        &  \\
        \vdots & &   A    &  \\
        0 &      &        &
        \end{array}\right)_{n\times m }
    \end{equation}
    (when $v=1$ for example). Then we define $\operatorname{Pr}(\lambda,v) = A$. Note that $\operatorname{Pr}(\lambda,v)$ is defined up to row operations.
    \begin{lemma} \label{lemma:prediagram}
        For two D-J classes over $K$ and a vertex $v$ of $K$, the set $\{\lambda_1,\lambda_2,v\}$ is an edge of $D'(K)$ if and only if $\operatorname{Pr}(\lambda_1,v)$ and $\operatorname{Pr}(\lambda_2,v)$ are the same after row operations.
    \end{lemma}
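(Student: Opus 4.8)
The plan is to unwind Definition~\ref{def:prediagram} using a normal form for characteristic maps over $\wed_v K$ that slightly strengthens the one in Remark~\ref{rem:projection}, and to recognize $\operatorname{Pr}(\lambda, v)$ as a block of that normal form. Concretely, since $\{v_1, v_2\}$ is a face of $\wed_v K$, any characteristic map $\Lambda$ over $\wed_v K$ can, after row operations, be brought to the block form
\[
\Lambda = \left(\begin{array}{cc|c} 1 & 0 & b \\ 0 & 1 & a \\ 0 & 0 & B \end{array}\right),
\]
where the columns are labelled $v_1, v_2$ and then the vertices of $[m]\setminus\{v\}$, the entries $a, b$ are row vectors of length $m-1$, and $B$ is an $(n-1)\times(m-1)$ matrix. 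Using that $\link_{\wed_v K}\{v_1\}$ and $\link_{\wed_v K}\{v_2\}$ are canonically isomorphic to $K$ (sending $v_2$, resp.\ $v_1$, to $v$), an elementary computation of the projection gives $\proj_{v_1}\Lambda = \left(\begin{smallmatrix}1 & a\\ 0 & B\end{smallmatrix}\right)$ and $\proj_{v_2}\Lambda = \left(\begin{smallmatrix}1 & b\\ 0 & B\end{smallmatrix}\right)$, with $v$ as the distinguished column; hence $\operatorname{Pr}(\proj_{v_1}\Lambda, v)$ and $\operatorname{Pr}(\proj_{v_2}\Lambda, v)$ both equal $B$ up to row operations.

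Granting this, the ``only if'' direction is immediate: if $\{\lambda_1, \lambda_2, v\}$ is an edge of $D'(K)$, pick a witnessing characteristic map $\Lambda$ over $\wed_v K$ with $\proj_{v_1}\Lambda = \lambda_1$ and $\proj_{v_2}\Lambda = \lambda_2$, put it in the normal form above, and read off that $\operatorname{Pr}(\lambda_1, v)$ and $\operatorname{Pr}(\lambda_2, v)$ both coincide with $B$ after row operations.

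For the ``if'' direction I would run the same computation in reverse. Assuming $\operatorname{Pr}(\lambda_1, v)$ and $\operatorname{Pr}(\lambda_2, v)$ agree after row operations, choose D-J representatives so that $\lambda_1 = \left(\begin{smallmatrix}1 & a\\ 0 & B\end{smallmatrix}\right)$ and $\lambda_2 = \left(\begin{smallmatrix}1 & b\\ 0 & B\end{smallmatrix}\right)$ share one and the same block $B = \operatorname{Pr}(\lambda_i, v)$ with distinguished column $v$, and define $\Lambda$ by the displayed block matrix; by the computation above its two projections are $\lambda_1$ and $\lambda_2$. What remains, and what I expect to be the main obstacle, is to verify that this $\Lambda$ satisfies the non-singularity condition over $\wed_v K$. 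For this I would use the explicit description of the facets of $\wed_v K$: they are the sets $\{v_1, v_2\}\cup(\sigma\setminus\{v\})$ for $\sigma$ a facet of $K$ containing $v$, and the sets $\{v_i\}\cup\rho$ for $i\in\{1,2\}$ and $\rho$ a facet of $K$ not containing $v$. On a facet of the first kind the coordinate columns $\Lambda(v_1), \Lambda(v_2)$ clear the top two rows and reduce the assertion to the $R$-independence of $\{B(w) : w\in\sigma\setminus\{v\}\}$, which holds since $B$ represents the projected characteristic map $\proj_v\lambda_1$ over $\link_K\{v\}$ and $\sigma\setminus\{v\}$ is a facet of $\link_K\{v\}$; on a facet $\{v_1\}\cup\rho$ the column $\Lambda(v_1)$ clears the first row and reduces the assertion to the $R$-independence of $\{\lambda_1(w) : w\in\rho\}$, which holds because $\rho$ is a facet of $K$, and the case $\{v_2\}\cup\rho$ is identical with $\lambda_2$ in place of $\lambda_1$.

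Besides this non-singularity check, the point requiring care is the bookkeeping of the vertices of $K$ that are not in $\link_K\{v\}$: one must read ``the same after row operations'' as a single simultaneous basis change applied to all $m-1$ columns of $\operatorname{Pr}(\lambda_i, v)$, since that is precisely what makes the stacking of $\lambda_1$ and $\lambda_2$ into a single matrix $\Lambda$ consistent. Modulo these points the argument is routine, and in fact the ``if'' direction is essentially a matrix restatement of the reconstruction principle for wedges (Proposition~\ref{prop:wedge1}), from which it could alternatively be quoted directly.
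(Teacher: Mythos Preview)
Your proof is correct and follows essentially the same route as the paper: both put $\Lambda$ into the block form with $\Lambda(v_1),\Lambda(v_2)$ coordinate vectors, read off that the two projections share the same lower-right block $B$, and then handle the converse by reversing the construction. The paper dismisses the converse in a single line (``the converse is similar''), whereas you spell out the facet-by-facet non-singularity check and also correctly note that this check is equivalent to an appeal to Proposition~\ref{prop:wedge1}; your extra care about treating ``the same after row operations'' as a simultaneous basis change on all $m-1$ columns is a valid point that the paper leaves implicit.
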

    \begin{proof}
        In \eqref{eq:projection}, one can further assume that $\Lambda(1_2)$ is a coordinate vector, since $\{1_1,1_2\}$ is a face of $\wed_1K$ and thus $\{\Lambda(1_1),\Lambda(1_2)\}$ is a unimodular set. Therefore, after row operations, one obtains
        \begin{equation}
            \Lambda = \left(\begin{array}{cc|ccc}
            1 & 0 &  a_2 & \cdots & a_m \\
            0 & 1 &  b_2 & \cdots & b_m \\ \hline
            0 & 0 &    &        &  \\
            \vdots & \vdots & &  A    &  \\
            0 & 0  &  &        &
            \end{array}\right)_{(n+1)\times(m+1)}
        \end{equation}
        and $\operatorname{Pr}(\lambda_1,v) = \operatorname{Pr}(\lambda_2,v) = A$. The converse is similar.
    \end{proof}
    }

    Let $J=(j_1, \ldots, j_m) \in \Z_+^m$ be an $m$-tuple whose coordinates are positive integers.
    We consider a colored graph $G(J)$ with $m$ colors constructed as follows: $G = G(J)$ is the graph determined by the $1$-skeleton of the simple polytope $\Delta^{j_1-1} \times \Delta^{j_2-1} \times \cdots \times \Delta^{j_m-1}$, where $\Delta^j$ is the $j$-dimensional simplex.
    One remarks that each edge $e$ of $G$ can be uniquely written as
    \[
        p_1 \times p_2 \times \dotsb \times p_{v-1}\times e_v \times p_{v+1} \times \dotsb \times p_m,
    \]
    where $p_i$ is a vertex of $\Delta^{j_i-1}$, $1\le i \le m$, $i\ne v$, and $e_v$ is an edge of $\Delta^{j_v-1}$.
    {Then} we color $v\in[m]$ on the edge $e$.

    \begin{definition}
        A graph homomorphism $p\colon G(J) \to D'(K)$ which preserves the edge coloring is called a \emph{puzzle} of the pair $(K,J)$.
        An edge $e = \{\balpha,\balpha'\}$ of $G(J)$ is called \emph{trivial} in a puzzle $p$ if $p(\balpha) = p(\balpha')$.
        A puzzle is said to be \emph{reducible} if it contains a trivial edge, or \emph{irreducible} otherwise.
    \end{definition}
    \begin{definition}
        Let $p\colon G(J) \to D'(K)$ be a puzzle. If a subgraph $G'$ of $G(J)$ corresponds to a face of $\Delta^{j_1-1} \times \Delta^{j_2-1} \times \cdots \times \Delta^{j_m-1}$, the graph homomorphism $p|_{G'}\colon G' \to D'(K)$ is called a \emph{subpuzzle} of $p$. If $G'$ is the 1-skeleton of a hypercube $(\Delta^1)^d$, then the corresponding subpuzzle is called a \emph{subcube}. In particular, when $d=2$, it is also called a \emph{subsquare}. We remark that a subpuzzle can be regarded as a puzzle so that $G' \cong G(J')$ for some $m$-tuple $J'$.
    \end{definition}

    For $J = (j_1, \dotsc, j_m) \in \Z_+^m$, let us denote by $I(J)$ the index set
    \[
        I(J) = \{\balpha = (\alpha_1,\dotsc,\alpha_m)\in \Z_+^m \mid 1\le \alpha_i\le j_i \text{ for }1\le i \le m\}.
    \]
    For an index $\balpha=(\alpha_1,\dotsc,\alpha_m)\in I(J)$, we consider a face of the simplicial complex $K(J)$
    \[
        \sigma(\balpha):= \{1_1,1_2,\dotsc,1_{j_1},
        \,\dotsc,\,
        m_1,m_2,\dotsc,m_{j_m}\}\setminus \{1_{\alpha_1},2_{\alpha_2},\dotsc,m_{\alpha_m} \}.
    \]
    Observe that $\sigma(\balpha)$ is indeed a face since it does not contain any minimal non-faces of $K(J)$. Furthermore, the complex $\link_{K(J)}\sigma(\balpha)$ is naturally isomorphic to $K$.
    Denote by $V(G(J))$ the vertex set of the graph $G(J)$.
    Then, there is an obvious identification $I(J) \to V(G(J))$ by labeling the vertices of $\Delta^{j_i-1}$ by $i_1,\dotsc,i_{j_i}$, $1\le j\le m$.  Using this identification, for any characteristic map $\lambda$ over $K(J)$, we obtain a puzzle of $(K,J)$ by assigning to the vertex $\alpha$ the characteristic map $\proj_{\sigma(\balpha)}\lambda$. Puzzles constructed in this way are said to be \emph{realizable}.
    \begin{remark}\label{rem:faceofrealizablepuzzle}
        It is easy to verify that a realizable puzzle is indeed a puzzle. More precisely, let $p\colon G(J) \to D'(K)$ be a realizable puzzle defined by $\lambda$. Then, it is necessary to show that every edge of $G(J)$ maps to an edge of $D'(K)$. For any subgraph $G'$ corresponding to a face of $\Delta^{j_1-1} \times \Delta^{j_2-1} \times \cdots \times \Delta^{j_m-1}$, it can be shown that the map $p|_{G'} \colon G' \to D'(K)$ is a realizable puzzle defined by a projection of $\lambda$. In particular, when $G'$ is an edge of $G(J)$, this argument guarantees that the image of an edge of $G(J)$ is indeed an edge of $D'(K)$, thereby completing the proof.
    \end{remark}
    For example, when $J = (2,3,1,\dotsc,1)$, then a realizable puzzle $p\colon G(J) \to D'(K)$ will look like
    \[
        \xymatrix{
            \lambda_{(1,1)} \ar@{-}[rrr]^2 \ar@{-}[rd]^2 \ar@{-}[ddd]^1 & & & \lambda_{(1,2)} \ar@{-}[ddd]^1 \ar@{-}[dll]^2\\
             & \lambda_{(1,3)} \ar@{-}[ddd]^1 & & \\
             & & & \\
            \lambda_{(2,1)} \ar@{-}[rrr]^2 \ar@{-}[dr]^2 & & & \lambda_{(2,2)}\ar@{-}[dll]^2 \\
             & \lambda_{(2,3)} & &
        }
    \]
    where $\lambda_{\balpha}  = \proj_{\sigma(\balpha)}\lambda$ with the subscripts $\balpha$ abbreviated so that $(\alpha_1,\alpha_2) = (\alpha_1,\alpha_2,1,1,\dotsc,1)$.
    Conversely, every realizable puzzle of $(K,J)$ determines a unique characteristic map over $K(J)$ up to Davis-Januszkiewicz equivalence by {the} uniqueness of {the} characteristic maps over wedges (see Proposition~\ref{prop:wedge1}). In conclusion, we have a bijection
    \begin{align*}
        \{\text{D-J classes over $K(J)$}\} &\to \{\text{realizable puzzles of $(K,J)$}\}. \\
        \Lambda\qquad \qquad & \mapsto \qquad\qquad p(\Lambda)
    \end{align*}
    Therefore, for the classification of toric spaces over $K(J)$, the main challenge is to determine whether a given puzzle is realizable or not.
    For example, the puzzle
    \[
        \xymatrix{
                \lambda_{1} \ar@{-}[rr]^v \ar@{-}[dd]^w  & & \lambda_{1} \ar@{-}[dd]^w \\
                  &   &  \\
                \lambda_{1}\ar@{-}[rr]^v &   & \lambda_{2} }
    \]
    is not realizable whenever $\lambda_1 \ne \lambda_2$ (see Proposition~\ref{prop:standardform}).

\section{Criterion on the realizability of puzzles}
    To determine whether a given puzzle $p$ is realizable or not, let us firstly consider subcubes of $p$.

    \begin{theorem}\label{thm:subcube}
        A puzzle $G(J) \to D'(K)$ is realizable if and only if all of its subcubes are realizable.
    \end{theorem}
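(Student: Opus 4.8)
The plan is to establish the two implications separately. The ``only if'' direction is immediate from Remark~\ref{rem:faceofrealizablepuzzle}: if $p$ is realized by a characteristic map $\Lambda$ over $K(J)$, then the restriction of $p$ to any subgraph of $G(J)$ corresponding to a face of $\Delta^{j_1-1}\times\cdots\times\Delta^{j_m-1}$ is a realizable puzzle, realized by a suitable projection of $\Lambda$; a subcube is by definition such a subpuzzle, so every subcube of $p$ is realizable. For the ``if'' direction I would induct on $|J|:=\sum_{i=1}^m(j_i-1)$, the dimension of $\Delta^{j_1-1}\times\cdots\times\Delta^{j_m-1}$ --- equivalently, the number of wedgings needed to build $K(J)$ from $K$.

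The cases $|J|\le1$ are trivial: $G(J)$ is a single vertex with $K(J)=K$, or a single edge which is its own subcube (and then $K(J)=\wed_vK$, so the statement matches the definition of an edge of $D'(K)$, Definition~\ref{def:prediagram}). So suppose $|J|\ge2$. If every $j_i\in\{1,2\}$ then $\Delta^{j_1-1}\times\cdots\times\Delta^{j_m-1}$ is a cube, hence $G(J)$ is itself a subcube of $p$ and we are done by hypothesis. Otherwise fix a coordinate $v$ with $d:=j_v\ge3$. By the minimal-non-face description of wedging (Section~\ref{sec:wedge_seed}), $K(J)\cong\wed_uK(J')$ for a suitable vertex $u$ of $K(J')$, where $J'=(j_1,\dots,j_{v-1},d-1,j_{v+1},\dots,j_m)$ and the two vertices created by the wedge are identified with $v_1$ and $v_d$; correspondingly $\link_{K(J)}\{v_1\}\cong K(J')\cong\link_{K(J)}\{v_d\}$, under isomorphisms carrying $u$ to $v_d$ and to $v_1$ respectively, while $\link_{K(J)}\{v_1,v_d\}\cong K(J'')$ with $J''=(j_1,\dots,j_{v-1},d-2,j_{v+1},\dots,j_m)$. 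Let $F_{\hat 1}$ and $F_{\hat d}$ be the facets of the product polytope obtained by deleting, in the $v$th factor, the vertex $v_1$, resp.\ $v_d$. Their vertices cover all of $V(G(J))$ (each index has $\alpha_v\ne1$ or $\alpha_v\ne d$), they meet in the face $F''$ spanned by $\{v_2,\dots,v_{d-1}\}$, and $F_{\hat 1}\cong F_{\hat d}\cong G(J')$ while $F''\cong G(J'')$. Since every subcube of $p|_{F_{\hat 1}}$, of $p|_{F_{\hat d}}$ and of $p|_{F''}$ is a subcube of $p$, the induction hypothesis yields characteristic maps $\Lambda_{\hat 1}$ over $\link_{K(J)}\{v_1\}$ realizing $p|_{F_{\hat 1}}$, $\Lambda_{\hat d}$ over $\link_{K(J)}\{v_d\}$ realizing $p|_{F_{\hat d}}$, and one over $\link_{K(J)}\{v_1,v_d\}$ realizing $p|_{F''}$.

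Finally I would glue $\Lambda_{\hat 1}$ and $\Lambda_{\hat d}$. By Proposition~\ref{prop:wedge1}, Lemma~\ref{lemma:prediagram} and Definition~\ref{def:prediagram}, a characteristic map $\Lambda$ over $K(J)\cong\wed_uK(J')$ with $\proj_{v_1}\Lambda=\Lambda_{\hat 1}$ and $\proj_{v_d}\Lambda=\Lambda_{\hat d}$ exists as soon as $\operatorname{Pr}(\Lambda_{\hat 1},u)=\operatorname{Pr}(\Lambda_{\hat d},u)$ up to row operations. Unwinding the identifications, $\operatorname{Pr}(\Lambda_{\hat 1},u)=\proj_{v_d}\Lambda_{\hat 1}$ and $\operatorname{Pr}(\Lambda_{\hat d},u)=\proj_{v_1}\Lambda_{\hat d}$ are characteristic maps over the \emph{same} complex $\link_{K(J)}\{v_1,v_d\}$ whose associated puzzles are both $p|_{F''}$; since a characteristic map is determined up to Davis-Januszkiewicz equivalence by its associated puzzle (the injectivity of the puzzle correspondence, itself a consequence of Proposition~\ref{prop:wedge1}), these two matrices agree up to row operations, so $\Lambda$ exists. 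As $\proj_{v_1}\Lambda$ and $\proj_{v_d}\Lambda$ realize $p$ over $F_{\hat 1}$ and $F_{\hat d}$, whose union exhausts $V(G(J))$, the puzzle of $\Lambda$ is $p$, completing the induction. The step I expect to be the main obstacle is exactly this gluing: carefully tracking the vertex $u$ through $K(J)\cong\wed_uK(J')$ and the two link isomorphisms so that $\operatorname{Pr}(\Lambda_{\hat 1},u)$ and $\operatorname{Pr}(\Lambda_{\hat d},u)$ genuinely live on the same complex and carry the same puzzle $p|_{F''}$ --- this coherence on the overlap $F''$ (nonempty precisely because $d\ge3$) is what forces the pre-diagram edge condition of Lemma~\ref{lemma:prediagram}. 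The remaining verifications are routine unwindings of the definitions of $\proj_\sigma$, $\operatorname{Pr}$ and $\sigma(\balpha)$.
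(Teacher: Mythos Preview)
Your argument is correct and takes a genuinely different route from the paper. The paper's ``if'' direction first pads $J$ (via canonical extensions) so that every $j_v\ge 2$, writes down the candidate matrix $\Lambda$ in the explicit standard form of Proposition~\ref{prop:standardform}, and then verifies non-singularity by repeatedly applying Proposition~\ref{prop:wedge1} to pairs of coordinate columns $\Lambda(v_a),\Lambda(v_b)$ with $a,b\ne 1$: each such projection deletes a row and a column and produces a smaller standard form, and the process bottoms out at standard forms of subcubes, which are non-singular by hypothesis. You instead induct on $|J|$, splitting $G(J)$ along a coordinate $v$ with $j_v\ge 3$ into two overlapping copies of $G(J')$ and gluing the inductively obtained realizations over their common facet $F''$ via Lemma~\ref{lemma:prediagram} (applied with $K(J')$ in place of $K$) together with the injectivity of the puzzle correspondence. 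The paper's route is constructive and delivers the explicit standard-form matrix as a byproduct; yours is more structural and sidesteps the matrix bookkeeping entirely, at the cost of the labeling verification you correctly flag --- which is indeed routine once one fixes the common vertices $v_2,\dots,v_{d-1}$ (and all $i_j$ for $i\ne v$) under both link isomorphisms, so that $\operatorname{Pr}(\Lambda_{\hat 1},u)$ and $\operatorname{Pr}(\Lambda_{\hat d},u)$ sit over the same labeled copy of $K(J'')$.
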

    Before proving the above theorem, we first start from an edge. Let
    \[
        \xymatrix{
            \lambda_1 \ar@{-}[rr] ^v & & \lambda_2
        }
    \]
    be an edge $e$ of the {pre-diagram $D'(K)$}. This edge can be regarded as a realizable puzzle corresponding to $\lambda$ over $\wed_v K$ whose two projections are $\lambda_1$ and $\lambda_2$.
    \begin{lemma}\label{lem:edge}
        In the setting above, assume that the characteristic matrix for $\lambda_1$ is
        \[
            \lambda_1 = \begin{pmatrix}
                \mathbf{a}_1 & \mathbf{a}_2 & \cdots & \mathbf{a}_m
            \end{pmatrix}_{n\times m}
        \]
        where $\a_i$ denotes the $i$th column vector of $\lambda_1$. Then the D-J class $\lambda$ can be expressed by a matrix of the form
        \begin{equation}\label{eqn:edgestandard}
            \lambda = \begin{pmatrix}
                1    & \cdots & v-1      &  v_1 & v_2& v+1 & \cdots & m    \\ \hline
                \a_1 & \cdots & \a_{v-1} & \a_v & 0  & \a_{v+1} & \cdots & \a_m \\
                e_1  & \cdots & e_{v-1}  & -1   &  1 &  e_{v+1} & \cdots & e_m
            \end{pmatrix}_{(n+1)\times (m+1)},
        \end{equation}
        for appropriate integers $e_i$ for $i\ne v$. The numbers above the horizontal line are indicators for vertices of simplicial complexes. If every $e_i$ is zero, then the characteristic map is a canonical extension.
    \end{lemma}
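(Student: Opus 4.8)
The plan is to reach the normal form \eqref{eqn:edgestandard} by a short chain of basis changes of $R^{n+1}$, each chosen so as not to undo the normalizations achieved by the previous ones; since throughout we only apply row operations to a characteristic matrix of $\lambda$, the output is automatically a characteristic map D-J equivalent to $\lambda$, and we never have to re-check the non-singularity condition \eqref{eq:nonsingular}. One preliminary remark: the edge $\{\lambda_1,\lambda_2,v\}$ of $D'(K)$ is unordered, and interchanging the two vertices of $\partial I$ is a simplicial automorphism of $\wed_vK$, so after relabelling $v_1$ and $v_2$ if necessary we may assume $\proj_{v_2}\lambda=\lambda_1$. (Conversely, one reads off from \eqref{eqn:edgestandard} that $\proj_{v_2}\lambda=\lambda_1$, while $\proj_{v_1}\lambda$ is the class $\lambda_2$ of the map $v\mapsto\a_v$, $j\mapsto\a_j+e_j\a_v$; this pins down which of the two labels is which.)

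First I would normalize the $v_2$-column. Since $\{v_2\}$ is a face of $\wed_vK$, it is contained in a facet, so by non-singularity $\lambda(v_2)$ is a member of an $R$-basis, and after a change of basis of $R^{n+1}$ we may take $\lambda(v_2)$ to be the last coordinate vector; write each remaining column as $\lambda(w)=(\mu(w),c_w)$ with $\mu(w)\in R^{n}$ and $c_w\in R$. Under the natural identification $\link_{\wed_vK}\{v_2\}\cong K$ that sends $v_1$ to $v$ (the identification used in Remark~\ref{rem:projection}), the projected characteristic map $\proj_{v_2}\lambda$ is represented by the matrix with columns $\mu(w)$; hence this matrix equals $\lambda_1=(\a_1,\dots,\a_m)$ up to row operations. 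A further change of basis of the block form $\mathrm{diag}(Q,1)$ with $Q\in GL_n(R)$ — which fixes the $v_2$-column — then turns that matrix into $(\a_1,\dots,\a_{v-1},\a_v,\a_{v+1},\dots,\a_m)$, so that the $v_1$-column carries $\a_v$. At this stage $\lambda$ already has the shape of \eqref{eqn:edgestandard} except that the bottom entry in the $v_1$-column is some integer $c_{v_1}$ instead of $-1$.

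It remains to correct $c_{v_1}$. Since $\{v\}\in K$, the vector $\a_v=\lambda_1(v)$ is a member of an $R$-basis, hence primitive, so there is $\r\in R^{n}$ with $\r^{T}\a_v=-1-c_{v_1}$ (for $R=\Z_2$ one only needs $\a_v\ne0$). Applying the row operation given by $\left(\begin{smallmatrix}I_n&0\\ \r^{T}&1\end{smallmatrix}\right)\in GL_{n+1}(R)$ leaves the top $n$ rows and the $v_2$-column unchanged, turns the bottom entry in the $v_1$-column into $-1$, and replaces each remaining bottom entry $c_j$ by $e_j:=c_j+\r^{T}\a_j$; the resulting matrix is precisely \eqref{eqn:edgestandard}. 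Finally, recalling that the canonical extension of $\lambda_1$ over $\wed_vK$ is by definition the characteristic map with matrix \eqref{eqn:edgestandard} and all $e_i=0$ (cf.\ (4.2) of \cite{CP13}), the last assertion of the lemma is immediate.

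The only genuinely forced step is the last one, where the non-singularity of $\lambda_1$ — in the guise of the primitivity of $\a_v$ — is exactly what allows the bottom entry over $v_1$ to be slid to $-1$; the rest is bookkeeping about which elements of $GL_{n+1}(R)$ preserve which normalization, and I expect the main (mild) obstacle to be keeping that bookkeeping, together with the $v_1\leftrightarrow v_2$ relabelling, clean and unambiguous.
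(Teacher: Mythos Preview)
Your argument is correct and follows essentially the same route as the paper's: make $\lambda(v_2)$ a coordinate vector, use $\proj_{v_2}\lambda=\lambda_1$ to identify the top $n$ rows with $(\a_1,\dots,\a_m)$, and then exploit the primitivity of $\a_v$ to shift the bottom entry over $v_1$ to $-1$ by adding a suitable linear combination of the first $n$ rows. The paper's proof is simply a terser version of exactly these three steps; your extra care about the $v_1\leftrightarrow v_2$ relabelling and the explicit form of the final row operation is welcome detail but not a different idea.
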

    \begin{proof}
        The proof is easy once the basis of $\Z^{n+1}$ is appropriately selected. More precisely, we select $\lambda(v_2)$ such that it is a coordinate vector. Since $\proj_{v_2} \lambda = \lambda_1$, $\lambda$ must be of the form
        \[
            \lambda = \begin{pmatrix}
                \a_1 & \cdots & \a_{v-1} & \a_v & 0  & \a_{v+1} & \cdots & \a_m \\
                \ast  & \cdots & \ast  & x   &  1 &  \ast & \cdots & \ast
            \end{pmatrix}_{(n+1)\times (m+1)},
        \]
        recalling that $\proj_{v_2} \lambda$ is obtained from $\lambda$ by deleting the column $v_2$ and the $(n+1)$th row. As $\a_v$ is a primitive vector, by adding to the last row a suitable linear combination of the first $n$ rows, it can be assumed that $x=-1$.
    \end{proof}
    The matrix \eqref{eqn:edgestandard} is called a \emph{standard form} for the edge $e$ centered at $\lambda_1$. If every $e_i$ is zero, then $\lambda_1 = \lambda_2$ and $\lambda$ is simply a canonical extension of $\lambda_1$.
    In this case, the edge $e$ is a loop in $D'(K)$. Note that two standard forms (of the same center) for the same edge can be changed to each other by using row operations. The standard form is unique in this sense.

    Let $p\colon G(J) \to D'(K)$ be a puzzle. Recall that the vertex set $V(G(J))$ is identified with the index set $I(J)$ and thus $p(\balpha)$ is a D-J class over $K$ for all $\balpha = (\alpha_1, \dotsc, \alpha_m)\in I(J)$. We fix an element $\1 = (1,1,\dotsc,1)\in I(J)$ and let $p(\1)$ be the characteristic matrix
    \[
        p(\1) = \begin{pmatrix}
            \a_1 & \cdots & \a_m
        \end{pmatrix}.
    \]
    In the graph $G(J)$, suppose that $\1$ and $\balpha$ are connected by an edge colored $v$. Then $\alpha_i = 1$ for $i\ne v$. We label the edge connecting $\1$ and $\balpha$ as $e_{v}^{\alpha_v}$. For each edge $e_{v}^{\alpha_v}$, we fix a standard form of $e_{v}^{\alpha_v}$ centered at $p(\1)$ like the following:
    \[
            \begin{pmatrix}
                1    & \cdots & v-1      &  v_1 & v_2& v+1 & \cdots & m    \\ \hline
                \a_1 & \cdots & \a_{v-1} & \a_v & 0  & \a_{v+1} & \cdots & \a_m \\
                e_{v,1}^{\alpha_v}  & \cdots & e_{v,v-1}^{\alpha_v}  & -1   &  1 &  e_{v,v+1}^{\alpha_v} & \cdots & e_{v,m}^{\alpha_v}
            \end{pmatrix}_{(n+1)\times (m+1)}.
    \]

    For convenience of notation of block matrices, we use the following notations:
    \[
        A_i = \begin{pmatrix}
            \a_i & 0 & \cdots & 0
        \end{pmatrix}_{n\times j_i},
    \]
    \[
        S_i = \begin{pmatrix}
            -1       & 1 &        & 0 \\
            \vdots   &   & \ddots &   \\
            -1       & 0 &        & 1
        \end{pmatrix}_{(j_i-1)\times j_i},
    \]
    and
    \[
        e_{k,i} = \begin{pmatrix}
            e_{k,i}^2 & 0      & \cdots & 0 \\
            e_{k,i}^3 & 0      & \cdots & 0 \\
            \vdots       & \vdots & \ddots & \vdots \\
            e_{k,i}^{j_k} & 0    & \cdots & 0
        \end{pmatrix}_{(j_k-1)\times j_i}.
    \]

    \begin{proposition}\label{prop:standardform}
        If the puzzle $p\colon G(J) \to D'(K)$ is realizable, the following matrix
        \begin{equation}\label{eqn:standardform}
            \Lambda = \begin{pmatrix}
                A_1     &  A_2     & A_3     &  \cdots  & A_m     \\
                S_1     &  e_{1,2} & e_{1,3} &  \cdots  & e_{1,m} \\
                e_{2,1} & S_2      & e_{2,3} &  \cdots  & e_{2,m} \\
                \vdots  &          & \ddots  &          & \vdots  \\
                e_{m-1,1}& \cdots  &         &  S_{m-1} & e_{m-1,m}  \\
                e_{m,1} &  \cdots  &         &  e_{m,m-1} & S_m
            \end{pmatrix}
        \end{equation}
        defines a characteristic map $\Lambda$ over $K(J)$ where the columns are labeled as
        \[
            1_1,\dotsc,1_{j_1},2_1,\dotsc,2_{j_2},\dotsc,m_1,\dotsc,m_{j_m}
        \]
        from left to right. In particular, the realizable puzzle is uniquely determined by $p(\1)$ and $p(\balpha)$ for vertices $\balpha$ adjacent to $\1$.
    \end{proposition}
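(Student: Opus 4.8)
The plan is to build the matrix $\Lambda$ in \eqref{eqn:standardform} explicitly from the local data $p(\1)$ and the standard forms of the edges $e_v^{\alpha_v}$ emanating from $\1$, and then to verify two things: first, that $\Lambda$ satisfies the non-singularity condition \eqref{eq:nonsingular} over $K(J)$, so that it really is a characteristic map; second, that $\Lambda$ projects correctly, i.e. $\proj_{\sigma(\balpha)}\Lambda = p(\balpha)$ for every vertex $\balpha$ adjacent to $\1$, so that the puzzle it realizes has the prescribed values near $\1$. Granting Proposition~\ref{prop:wedge1} (uniqueness of characteristic maps over wedges), these two facts force $\Lambda$ to agree with the given realizable puzzle everywhere, since $K(J)$ is obtained from $K$ by a sequence of wedgings and at each stage the extension is determined by its two projections; this also yields the last sentence of the statement.

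First I would verify the non-singularity of $\Lambda$. A maximal face of $K(J)$ has the form $\sigma(\balpha) \cup \tau$ where $\tau$ is a facet of $\link_{K(J)}\sigma(\balpha) \cong K$; equivalently, recalling the description of minimal non-faces of $K(J)$, the columns of a facet are obtained by choosing, for each $i \in [m]$, all of the block $\{i_1,\dots,i_{j_i}\}$ except one index $\alpha_i$, for those $i$ outside a facet $F$ of $K$, together with all of $\{i_1,\dots,i_{j_i}\}$ minus nothing... — more carefully, one checks directly from the minimal-non-face description that the facets of $K(J)$ are exactly the sets $\sigma(\balpha) \cup \{ i_{\alpha_i} : i \in F\}$ for $\balpha \in I(J)$ and $F$ a facet of $K$. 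For such a facet, the corresponding square submatrix of $\Lambda$ can be brought, by column operations within each block $i$ (which amount to subtracting the $\alpha_i$-th column of the block from the others, using the $-1$'s in $S_i$), into a block-triangular form whose diagonal blocks are the identity matrices coming from $S_i$ (for $i \notin F$) and the submatrix of $p(\1)$ on columns $F$ together with the extra rows; the determinant is then, up to sign, $\det$ of that latter submatrix, which is a unit because $p(\1)$ is a characteristic map over $K$. I expect this bookkeeping — keeping track of exactly which columns survive, and confirming that the column operations genuinely triangularize the $e_{k,i}$ off-diagonal blocks to zero on the relevant submatrix — to be the main technical obstacle, though it is essentially the same computation already carried out for a single edge in Lemma~\ref{lem:edge} and Lemma~\ref{lemma:prediagram}, now performed one block at a time.

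Next I would check the projection identities. For $\balpha$ adjacent to $\1$ along the color-$v$ edge $e_v^{\alpha_v}$, the face $\sigma(\balpha)$ consists of all block-columns except the single index $v_{\alpha_v}$ in block $v$ — no, more precisely $\sigma(\balpha)$ omits one index from every block; for $\balpha$ adjacent to $\1$ we have $\alpha_i = 1$ for $i \neq v$, so $\sigma(\balpha)$ omits $i_1$ for $i \neq v$ and $v_{\alpha_v}$. Projecting $\Lambda$ with respect to $\sigma(\balpha)$ means quotienting by the span of those columns; using the identity-like structure of the $S_i$ blocks and the fact that the reserved columns are coordinate-like after the row reductions recorded in the standard forms, this quotient is computed column by column and returns precisely the standard form of $e_v^{\alpha_v}$ centered at $p(\1)$, whose relevant projection is $p(\balpha)$ by construction. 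Finally, invoking Proposition~\ref{prop:wedge1} inductively along the sequence of wedges exhibiting $K(J)$: the restriction of (the puzzle realized by) $\Lambda$ to each subcube through $\1$ is determined by its values at $\1$ and the adjacent vertices, which match $p$; propagating this through all subcubes and using Theorem~\ref{thm:subcube}-type reasoning (or simply the uniqueness of wedge extensions) shows the realizable puzzle is globally pinned down by $p(\1)$ and the adjacent $p(\balpha)$, completing the proof.
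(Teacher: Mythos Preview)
Your non-singularity argument has a genuine gap. You claim that for a facet $\sigma(\balpha)\cup\{i_{\alpha_i}:i\in F\}$ of $K(J)$, column operations within each block reduce the relevant minor to (essentially) the minor of $p(\1)$ on $F$, which is a unit. This is true only when $\alpha_i=1$ for every $i\notin F$: in that case the omitted column from block~$i$ is $i_1$, the remaining columns $i_2,\dotsc,i_{j_i}$ are coordinate vectors, and the reduction goes through. But when $\alpha_i\neq 1$ for some $i\notin F$, the column $i_1$ is \emph{present} in the facet, and that column carries all the entries $e_{k,i}^r$ from the edge standard forms across several row-blocks. No column operation within block~$i$ kills those entries, and the resulting minor genuinely depends on the $e$'s. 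For instance, with $K=\partial(\text{square})$, $J=(2,2,1,1)$, $F=\{3,4\}$, $\alpha_1=\alpha_2=2$, the $4\times 4$ minor is
\[
\det\begin{pmatrix}
\a_1 & \a_2 & \a_3 & \a_4\\
-1 & e_{1,2}^2 & e_{1,3}^2 & e_{1,4}^2\\
e_{2,1}^2 & -1 & e_{2,3}^2 & e_{2,4}^2
\end{pmatrix},
\]
which mixes the data of \emph{two} edges; its being $\pm 1$ is precisely the realizability of the corresponding subsquare, not a consequence of $p(\1)$ being characteristic. In other words, the non-singularity of the matrix~\eqref{eqn:standardform} is essentially equivalent to realizability of the puzzle (this is exactly how the paper later proves Theorem~\ref{thm:subcube}), so it cannot be established by a computation that ignores the realizability hypothesis.

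The paper's proof avoids this entirely by running the logic in the opposite direction: since $p$ is realizable, there \emph{is} some characteristic map $\Lambda'$ over $K(J)$, and one shows by successive row operations (one wedge at a time, as in Lemma~\ref{lem:edge}) that $\Lambda'$ can be brought to the block form~\eqref{eqn:standardform}. Non-singularity is then automatic, and computing $\proj_{\sigma(e_v^{\alpha_v})}\Lambda'$ identifies the entries with the prescribed $e_{k,i}^r$ via uniqueness of edge standard forms. Your projection/uniqueness discussion in the second half is fine, but to repair the first half you should replace the direct minor computation by this row-reduction of the given realizing map.
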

    We call \eqref{eqn:standardform} a \emph{standard form} of the characteristic map $\Lambda$ centered at $\1$.

    \begin{proof}
        The proof is once again conducted by a choice of basis which similar to Lemma~\ref{lem:edge}. Note that Lemma~\ref{lem:edge} deals with a single wedge and we can obtain $K(J)$ by consecutive wedge operations from $K$. Although we present the construction for specific $J=(2,3,1,\dotsc,1)$, the general case is identical. Let us start from the characteristic map for $K(2,1,\dotsc,1)=\wed_1 K$
        \[
            \begin{pmatrix}
                   \a_1    & 0 &   \a_2    &   \a_3    &   \cdots  \\
                   -1      & 1 &  \ast    &   \ast    &   \cdots
            \end{pmatrix}.
        \]
        Next, we perform a wedge operation at the vertex 2 and obtain
        \[
            \begin{pmatrix}
                   \a_1 &0       & \a_2  &0  &   \a_3    &   \cdots  \\
                   -1   &1       & \ast  &0  &   \ast    &   \cdots  \\
                   \ast &x       & y     &1  &   \ast    &   \cdots
            \end{pmatrix}.
        \]
        We add $-x$ times the $(n+1)$th row to the $(n+2)$th row. After that, we can set $y= -1$ by adding to the $(n+2)$th row a linear combination of the first $n$ rows. The result is as follows:
        \[
            \begin{pmatrix}
                  \a_1 &0   &   \a_2 &0   &   \a_3    &   \cdots  \\
                  -1   &1   &   \ast &0   &   \ast    &   \cdots  \\
                  \ast &0   &   -1   &1   &   \ast    &   \cdots
            \end{pmatrix}.
        \]
        We apply a wedge at the vertex 2 (regardless of whether it is $2_1$ or $2_2$) to obtain
        \[
            \begin{pmatrix}
                  \a_1 &0   &   \a_2 &0    &0    &   \a_3    &   \cdots  \\
                  -1   &1   &   \ast &0    &0    &   \ast    &   \cdots  \\
                  \ast &0   &   -1   &1    &0    &   \ast    &   \cdots  \\
                  \ast &a   &\  b    &c    &1    &   \ast    &   \cdots
            \end{pmatrix}.
        \]
        Our aim is $a=0$, $b=-1$, and $c=0$. For this, add $-a$ times the $(n+1)$th row and $-c$ times the $(n+2)$th row to the $(n+3)$th row. As before, a linear combination of the first $n$ rows is added to the $(n+3)$th row to obtain $b=-1$.
        After this construction, we see that every characteristic map should have the form of \eqref{eqn:standardform} except the numbers $e_{k,i}^r$. Let us return to
        \begin{equation}\label{eqn:J23}
            \Lambda = \begin{pmatrix}
                  \a_1      &0 & \a_2      &0 &0 & \a_3     & \cdots  \\
                  -1        &1 & f_{1,2}^2 &0 &0 & f_{1,3}^2& \cdots  \\
                  f_{2,1}^2 &0 & -1        &1 &0 & f_{2,3}^2& \cdots  \\
                  f_{2,1}^3 &0 & -1        &0 &1 & f_{2,3}^3& \cdots
            \end{pmatrix}
        \end{equation}
        for some $f_{k,i}^r \in \Z$.
        Observe that, for example,
         \[
            \proj_{1_2}(\proj_{2_3}\Lambda)= \begin{pmatrix}
                  \a_1       & \a_2      &0 & \a_3     & \a_4 & \cdots  \\
                  f_{2,1}^2  & -1        &1 & f_{2,3}^2& f_{2,4}^2 & \cdots
            \end{pmatrix}
        \]
        is the standard form for the edge $e_2^2$ in the realizable puzzle. Therefore,  by {the} uniqueness of standard forms for edges, we can assume that $f_{k,i}^r = e_{k,i}^r$ after further row operations. It is easy to observe that this argument generally holds. Recall that
        \[
            \sigma(\balpha):= \{1_1,1_2,\dotsc,1_{j_1},
        \,\dotsc,\,
        m_1,m_2,\dotsc,m_{j_m}\}\setminus \{1_{\alpha_1},2_{\alpha_2},\dotsc,m_{\alpha_m} \}.
        \]
        Similarly, for an edge $e$ whose endpoints are $\balpha$ and $\bbeta$, define the simplex
        \[
        \sigma(e):= \sigma(\balpha) \cap \sigma(\bbeta).
        \]
        Note that $\link_{K(J)}\sigma(e_v^{\alpha_v})$ is naturally isomorphic to $\wed_v K$ for any $v$ and $\alpha_v$. In addition, note that
        \[
            \sigma(e_v^{\alpha_v})= \{1_1,1_2,\dotsc,1_{j_1},
        \,\dotsc,\,
        m_1,m_2,\dotsc,m_{j_m}\}\setminus \{1_1,2_1,\dotsc,m_1,v_{\alpha_v} \}
        \]
        and $\proj_{\sigma(e_v^{\alpha_v})} \Lambda$ has the desired standard form for every $v$ and $\alpha_v$.
    \end{proof}
 Two edges
 \begin{align*}
   e &= p_1 \times \dotsb \times p_{v-1}\times e_v \times p_{v+1} \times \dotsb \times p_m \quad \text{ and } \\
   e' &= p'_1 \times \dotsb \times p'_{v-1}\times e'_v \times p'_{v+1} \times \dotsb \times p'_m
 \end{align*}of $G(J)$  that are colored $v$  are said to be \emph{parallel} if $e_v = e'_v$.

\begin{corollary}
  A realizable puzzle corresponds to a canonical extension if and only if the puzzle is reducible. Furthermore, if an edge $e$ of a realizable puzzle is trivial, then the edges parallel to $e$ are also trivial.
\end{corollary}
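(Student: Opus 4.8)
\emph{Proof proposal.} The plan is to isolate a single technical fact and then read off both assertions as bookkeeping. Fix a realizable puzzle $p\colon G(J)\to D'(K)$ and let $\Lambda$ over $K(J)$ be the corresponding characteristic map. Let $e=\{\balpha,\balpha'\}$ be a $v$-colored edge of $G(J)$, so that $\balpha$ and $\balpha'$ agree outside the $v$-th slot and there differ in the copies $v_s$ and $v_{s'}$ of the vertex $v$; note this already forces $j_v\ge 2$. Let $J^-$ be obtained from $J$ by replacing $j_v$ with $j_v-1$, so that $K(J)=\wed_u(K(J^-))$ is a single wedge at a copy $u$ of $v$; choosing the labelling appropriately, its two new vertices are $v_s$ and $v_{s'}$. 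I would prove:

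\emph{Claim.} The edge $e$ is trivial in $p$ if and only if $\Lambda$ is, up to $D$-$J$ equivalence, the canonical extension of $\proj_{v_s}\Lambda$ over $K(J^-)$ along that wedge.

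For the ``if'' direction, ``$\Lambda$ is a canonical extension'' means $\proj_{v_s}\Lambda=\proj_{v_{s'}}\Lambda$ as $D$-$J$ classes over $K(J^-)$; call this common class $\mu$. Since $\sigma(\balpha)$ contains $v_{s'}$, $\sigma(\balpha')$ contains $v_s$, and $\balpha,\balpha'$ agree outside slot $v$, the faces $\sigma(\balpha)\setminus\{v_{s'}\}$ and $\sigma(\balpha')\setminus\{v_s\}$ are one and the same face $\tau$ of $K(J^-)$, whence $p(\balpha)=\proj_{\sigma(\balpha)}\Lambda=\proj_{\tau}\mu=\proj_{\sigma(\balpha')}\Lambda=p(\balpha')$, so $e$ is trivial. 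For the ``only if'' direction — the heart of the matter — assume $e$ is trivial. Using $\sigma(e)\cup\{v_{s'}\}=\sigma(\balpha)$ and $\sigma(e)\cup\{v_s\}=\sigma(\balpha')$, the characteristic map $\nu_e:=\proj_{\sigma(e)}\Lambda$ over $\wed_vK$ has its two projections onto $K$ equal to $p(\balpha)$ and $p(\balpha')$, which now coincide; Proposition~\ref{prop:wedge1} therefore forces $\nu_e$ to be a canonical extension, so by Lemma~\ref{lem:edge} a standard form for the edge $e$ may be taken with all of its parameters $e_i$ equal to $0$. Re-centering $p$ at $\balpha$ and running the construction in the proof of Proposition~\ref{prop:standardform} with this choice (so $e$ becomes one of the edges $e_v^r$ issuing from the center), the standard form \eqref{eqn:standardform} of $\Lambda$ has the corresponding block of parameters $e_{v,i}^{r}$ all zero. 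One now inspects \eqref{eqn:standardform} directly: the column $v_r$ is a coordinate vector, so $\proj_{v_r}\Lambda$ is obtained by deleting that column and its row, which returns exactly a standard form for $K(J^-)$; while $\proj_{v_1}\Lambda$, computed from $\Lambda(v_1)$ — which has $-1$ in that same row — reproduces the same matrix with the columns $v_1$ and $v_r$ interchanged, i.e.\ precisely under the natural identification $\link_{K(J)}\{v_1\}\cong K(J^-)\cong\link_{K(J)}\{v_r\}$. Thus $\proj_{v_1}\Lambda=\proj_{v_r}\Lambda$ as $D$-$J$ classes over $K(J^-)$, and by Proposition~\ref{prop:wedge1} $\Lambda$ is the associated canonical extension.

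Given the Claim, both statements follow. A puzzle is reducible precisely when it has a trivial edge, and by the Claim this happens exactly when $\Lambda$ is a canonical extension along the wedge attached to that edge; this is the first assertion. For the ``furthermore'': if $e$ (colored $v$, copies $v_s,v_{s'}$) is trivial, the Claim makes $\Lambda$ the canonical extension along the wedge producing $v_s$ and $v_{s'}$, and then the ``if'' direction of the Claim, applied to every $v$-colored edge whose two copies of $v$ are again $v_s$ and $v_{s'}$ — that is, to every edge parallel to $e$ — shows each of them is trivial.

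The main obstacle is the ``only if'' half of the Claim: upgrading the local information ``$\nu_e$ is a canonical extension over $\wed_vK$'' to the global identity ``$\proj_{v_1}\Lambda=\proj_{v_r}\Lambda$ over $K(J^-)$''. The naive attempt — comparing the two projections face by face — only yields their agreement on the single face $\sigma(e)$, which is far too weak; the point is instead to exploit the rigidity of the standard form \eqref{eqn:standardform}, where the vanishing of one block of parameters $e_{v,i}^{r}$ forces the two projections to be \emph{literally} the same matrix up to the combinatorially prescribed transposition $v_1\leftrightarrow v_r$, rather than merely $D$-$J$ equivalent. Verifying that this transposition really is the canonical identification of the two vertex links with $K(J^-)$ is the only other place where care is needed.
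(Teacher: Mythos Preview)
Your proof is correct and follows essentially the same route as the paper's: re-center so that the trivial edge issues from $\1$, use uniqueness of the characteristic map over $\wed_vK$ with prescribed projections to conclude the edge's standard form has all $e_i=0$, read this off as a zero row in the global standard form \eqref{eqn:standardform}, and deduce that $\Lambda$ itself is a canonical extension with $\proj_{v_1}\Lambda=\proj_{v_r}\Lambda$; parallel edges then fall out by projecting further. The only cosmetic differences are that you package the argument as an explicit if-and-only-if Claim and spell out the matrix verification of $\proj_{v_1}\Lambda=\proj_{v_r}\Lambda$ by hand, whereas the paper simply invokes Lemma~\ref{lem:edge} at that step. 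One small remark: your appeal to Proposition~\ref{prop:wedge1} for the uniqueness step (``forces $\nu_e$ to be a canonical extension'') is using that proposition for the uniqueness of $\lambda$ given its two projections rather than for the non-singularity criterion actually stated there; the paper makes the same citation for the same purpose, so this is consistent with the surrounding text, but strictly speaking it is the uniqueness assertion from \cite{CP13} that is being invoked.
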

\begin{proof}
    If a trivial edge $e$ exists, we may assume that $e$ is colored $v$ and its endpoints are $\1$ and $\balpha = (\alpha_1,\dotsc, \alpha_m)$ so that $p(\balpha) = p(\1)$. Then we have observed that the edge corresponds to a canonical extension and every $e_i$ is zero in the standard form $\Lambda$ in \eqref{eqn:edgestandard} for the edge. That means there is a row of the matrix of \eqref{eqn:standardform} such that its $v_1$th entry is $-1$, $v_{\alpha_v}$th entry is $1$ and the remaining entries are zero ($e_{v,i}^{\alpha_v}=0$ for all $i$). Therefore, by Lemma~\ref{lem:edge}, it immediately follows that $\Lambda$ also represents a canonical extension such that
    \begin{equation}\label{eqn:trivial}
        \proj_{v_1} \Lambda = \proj_{v_{\alpha_v}} \Lambda.
    \end{equation}
    Let $e'$ be an edge parallel to $e$ whose endpoints are $\bbeta$ and $\bbeta'$. Then there is a face $\sigma$ of $K(J)$ such that $\sigma\cup\{v_1\}$ and $\sigma\cup\{v_2\}$ are also faces of $K(J)$ and
    \[
        p(\bbeta)=\proj_{\sigma\cup\{v_1\}} \Lambda \quad\text{ and } \quad p(\bbeta')=\proj_{\sigma\cup\{v_2\}} \Lambda.
    \]
    Therefore they are equal.
\end{proof}
%
%
%

    For later use, we state a version of a result in \cite{CP13}.
    \begin{proposition}[Proposition~4.4, \cite{CP13}]\label{prop:wedge1}
        Let $\lambda\colon V(\wed_vK) \to R^{n+1}$ be a map which is not necessarily a characteristic map. Label the two new vertices as $v_1$ and $v_2$. Moreover assume that the set $\{\lambda(v_1),\lambda(v_2)\}$ is unimodular. Then $\lambda$ is a characteristic map if and only if $\proj_{v_1}\lambda$ and $\proj_{v_2}\lambda$ are characteristic maps over $K$.
    \end{proposition}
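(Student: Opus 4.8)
The plan is to make everything explicit by a single change of basis and then match determinants. First, note that the hypothesis that $\{\lambda(v_1),\lambda(v_2)\}$ is unimodular already forces each of $\lambda(v_1),\lambda(v_2)$ to be primitive — any nonunit common factor of the entries of $\lambda(v_1)$ would divide every $2\times 2$ minor of the matrix with columns $\lambda(v_1),\lambda(v_2)$ — so $\proj_{v_1}\lambda$ and $\proj_{v_2}\lambda$ make sense, and moreover $\{\lambda(v_1),\lambda(v_2)\}$ extends to an $R$-basis of $R^{n+1}$. Since ``being a characteristic map'' and the two statements about the projections are all invariant under base change of $R^{n+1}$ (resp. of $R^n$ for the projections), I may assume $\lambda(v_1)=e_{n+1}$ and $\lambda(v_2)=e_n$. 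Writing the columns indexed by $V(K)\setminus\{v\}$ as $(\a_w,c_w,d_w)^{\mathsf T}$ with $\a_w\in R^{n-1}$, a routine bookkeeping using the natural isomorphism $\link_{\wed_vK}\{v_1\}\cong K$ (sending $v_2$ to $v$) shows that $\proj_{v_1}\lambda$ has matrix $\left(\begin{smallmatrix}\mathbf 0&\a_w\\ 1&c_w\end{smallmatrix}\right)$ and, symmetrically via $\link_{\wed_vK}\{v_2\}\cong K$, that $\proj_{v_2}\lambda$ has matrix $\left(\begin{smallmatrix}\mathbf 0&\a_w\\ 1&d_w\end{smallmatrix}\right)$, in both cases with the displayed first column corresponding to the vertex $v$.

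Next I would record the facet structure of $\wed_vK$ directly from its minimal non-faces: every facet of $\wed_vK$ meets $\{v_1,v_2\}$ (a face avoiding $\{v_1,v_2\}$ is a face of $K$ not containing $v$, hence not maximal), and the facets are exactly of three kinds: (A) $\{v_1,v_2\}\cup\tau$ with $\tau\cup\{v\}$ a facet of $K$; (B1) $\{v_1\}\cup\rho$ with $\rho$ a facet of $K$ avoiding $v$; (B2) $\{v_2\}\cup\rho$ with $\rho$ a facet of $K$ avoiding $v$. For a type-(A) facet, expanding the $(n+1)\times(n+1)$ determinant along the two coordinate columns $v_1$ and $v_2$ gives $\pm\det(\a_w)_{w\in\tau}$, which up to sign is the determinant of $\proj_{v_1}\lambda$ — equivalently of $\proj_{v_2}\lambda$ — on the facet $\tau\cup\{v\}$ of $K$. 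For a type-(B1) facet, expanding along the coordinate column $v_1$ gives $\pm\det\!\left(\begin{smallmatrix}\a_w\\ c_w\end{smallmatrix}\right)_{w\in\rho}$, the determinant of $\proj_{v_1}\lambda$ on the facet $\rho$; symmetrically a type-(B2) facet produces the determinant of $\proj_{v_2}\lambda$ on $\rho$. Here ``being a unit of $R$'' covers $R=\Z$ and $R=\Z_2$ uniformly, and signs are immaterial.

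Granting these identities, the equivalence falls out. The matrix $\lambda$ is nonsingular on all type-(A) facets iff $\proj_{v_1}\lambda$ (equivalently $\proj_{v_2}\lambda$) is nonsingular on all facets of $K$ containing $v$; it is nonsingular on all type-(B1), resp. type-(B2), facets iff $\proj_{v_1}\lambda$, resp. $\proj_{v_2}\lambda$, is nonsingular on all facets of $K$ avoiding $v$. Conjoining the three conditions — and using the type-(A) symmetry to supply the $v$-containing facets to whichever of $\proj_{v_1}\lambda,\proj_{v_2}\lambda$ would otherwise lack them — yields precisely: $\lambda$ is a characteristic map over $\wed_vK$ if and only if both $\proj_{v_1}\lambda$ and $\proj_{v_2}\lambda$ are characteristic maps over $K$. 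Both implications are just this chain of equivalences read in the two directions.

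The determinant manipulations are routine; the one place demanding care is the combinatorics of $\wed_vK$ — checking that the three families really exhaust the facets (equivalently, that $\wed_vK$ is pure of dimension $n$ and every facet meets $\{v_1,v_2\}$) and that the identifications $\link_{\wed_vK}\{v_i\}\cong K$ correctly match a facet of $K$ containing $v$ with a type-(A) facet and a facet of $K$ avoiding $v$ with a type-(B) facet. I expect this bookkeeping to be the main obstacle; once it is pinned down, the cofactor expansions are forced and the proof closes.
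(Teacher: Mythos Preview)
The paper does not prove Proposition~\ref{prop:wedge1}; it is merely quoted from \cite{CP13} and used as a black box, so there is no proof in the present paper against which to compare your argument.

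Your argument is correct and is the natural direct proof. The normalisation $\lambda(v_1)=e_{n+1}$, $\lambda(v_2)=e_n$ is legitimate precisely by the unimodularity hypothesis (over $\Z$ this is the statement that a unimodular pair extends to a basis; over $\Z_2$ it is linear independence), and the invariance of all three conditions under base change justifies working in these coordinates. Your trichotomy of facets of $\wed_vK$ is right: purity of $K$ (it is an $(n-1)$-sphere) forces any facet of $\wed_vK$ meeting $\{v_1,v_2\}$ in a single vertex to have the form $\{v_i\}\cup\rho$ with $|\rho|=n$, hence $\rho$ a genuine facet of $K$ not containing $v$; and a facet containing both $v_1,v_2$ is $\{v_1,v_2\}\cup\tau$ with $\tau$ an $(n-1)$-element face of $\link_K\{v\}$, hence $\tau\cup\{v\}$ a facet of $K$. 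The cofactor expansions then match the $(n+1)\times(n+1)$ determinants of $\lambda$ bijectively with the $n\times n$ determinants of $\proj_{v_1}\lambda$ and $\proj_{v_2}\lambda$ exactly as you describe, and the equivalence follows. The only point you flag as needing care --- that the three families exhaust the facets --- is settled by the dimension count above.
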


    Now we are ready to prove Theorem~\ref{thm:subcube}.

    \begin{proof}[Proof of Theorem~\ref{thm:subcube}]
        One direction is straightforward by Remark~\ref{rem:faceofrealizablepuzzle}. To prove the converse, assume that every subcube of $p\colon G(J) \to D'(K)$ is realizable and assume that $j_v \ge 2$ for all $v$ (if not, some canonical extensions may be used). At this time we are not sure whether $p$ is realizable, but we can still follow the construction of Proposition~\ref{prop:standardform} and obtain the matrix $\Lambda$ in \eqref{eqn:standardform}. We recursively apply Proposition~\ref{prop:wedge1} to show that $\Lambda$ is non-singular. Observe that in \eqref{eqn:standardform}, $\{\Lambda(v_2),\,\Lambda(v_3),\dotsc,\, \Lambda(v_{j_v})\}$ is a unimodular set for any $v$ such that $1 \le v \le m$ because all vectors are coordinate vectors. Apply Proposition~\ref{prop:wedge1} to two vertices $v_a$ and $v_b$ of $K(J)$ other than $v_1$. By recalling that the projection simply involves the deletion of a column and a row, we obtain two ``smaller'' standard forms. Continuing this process, we reach matrices of the following form
        \[
            \begin{pmatrix}
                \a_1    &   0   &   \a_2    & 0   & \a_3 & \cdots & \a_m & 0 \\
                -1      &   1   &   \ast    & 0   & \ast &     & \ast& 0 \\
                \ast    &   0   &   -1      & 1   & \ast &     &  \vdots   & 0 \\
                \vdots  & \vdots& & & \ddots & &  &\\
                \ast    &  0    &    \ast   & 0   &    &     & -1  & 1
            \end{pmatrix}.
        \]
        Here, we terminate our process since the available pair of coordinate vectors is not found. However, this matrix is simply a standard form of a subcube, which is realizable by assumption. In particular, it is non-singular. Finally, a recursive application of Proposition~\ref{prop:wedge1} shows that $\Lambda$ is non-singular, proving that the puzzle $p$ is realizable.
    \end{proof}

\section{Squares to Cubes}

%
%
    \begin{definition}
        A \emph{marked row vector} in $\Z^m$ is a pair $(\r,v)$ where $1\le v \le m$ and $\r = (r_1,\dotsc,r_m) \in \Z^m$ is a row vector. The number $v$ is called the \emph{marking} of the marked row vector. When there is no danger of confusion, the markings can simply be omitted to write $(\r,v) = \r$. When $r_v = 0$, the marked row vector is called \emph{reduced}.
    \end{definition}
    Let us fix a characteristic map $\Lambda$ over $K$ and let $(\r,v)$ and $(\r',v)$ be two marked row vectors in $\Z^m$ with the same markings. We write $\r\sim_\Lambda \r'$ if $\r-\r'$ is a linear combination of row vectors of $\Lambda$ over $\Z$. Assume that $\r = (r_1,\dotsc,r_m) \in \Z^m$ and
    \[
        \Lambda = \begin{pmatrix}
            \a_1 & \dotsc & \a_m
        \end{pmatrix}_{n\times m}.
    \]
    Let us use the notation
    \[
        \begin{pmatrix}
            \Lambda \\
            (\r,v)
        \end{pmatrix} :=
        \begin{pmatrix}
            1    & \cdots & v-1      &  v_1    & v_2 & v+1      & \cdots & m   \\ \hline
            \a_1 & \cdots & \a_{v-1} &  \a_{v} & 0   & \a_{v+1} & \cdots & \a_m \\
            r_1  & \cdots & r_{v-1}  & -1+r_v  & 1   & r_{v+1}  & \cdots & r_m
        \end{pmatrix}_{(n+1)\times (m+1)}.
    \]
    Note that if the matrix $\begin{pmatrix} \Lambda \\ \r \end{pmatrix}$ defines a characteristic map for $\wed_v K$, it is simply a standard form if $\r$ is reduced. Observe that
    \begin{center}
    $\begin{pmatrix} \Lambda \\ (\r,v) \end{pmatrix}$ are D-J equivalent to $\begin{pmatrix} \Lambda \\ (\r',v) \end{pmatrix}$ if and only if
            $\r\sim_\Lambda \r'$.
    \end{center}

    Consider its two projections over $K$. The projection $\proj_{v_2}\begin{pmatrix} \Lambda \\ \r \end{pmatrix}$ is simply $\Lambda$, whereas the other projection is denoted by $\Lambda^\r := \proj_{v_1}\begin{pmatrix} \Lambda \\ \r \end{pmatrix}$.

    Let us calculate $\Lambda^\r$ explicitly. Without loss of generality, we assume that the marking of $\r$ is $1$. Then by definition,
    \[
        \begin{pmatrix} \Lambda \\ \r \end{pmatrix} =
        \begin{pmatrix}
               \a_1    & 0 &   \a_2    &   \a_3    &   \cdots & \a_m  \\
               -1+r_1  & 1 &  r_2      &   r_3     &   \cdots & r_m
        \end{pmatrix}.
    \]
    In fact, one may further assume that $\r$ is reduced, that is, $r_1 = 0$ (see the proof of Lemma~\ref{lem:edge}). After suitable row operations, it becomes
    \[
        \begin{pmatrix}
               0       & \a_1 &   \a_2 + r_2\a_1    &   \a_3 + r_3\a_1    &   \cdots & \a_m+ r_m\a_1  \\
               -1      & 1    &  r_2                &      r_3            &   \cdots & r_m
        \end{pmatrix}.
    \]
    Deleting the $1_1$th column and the last row, we obtain the projection
    \begin{equation}\label{eqn:lambdar}
        \Lambda^\r = \begin{pmatrix}
            \a_1 &   \a_2 + r_2\a_1    &   \a_3 + r_3\a_1    &   \cdots & \a_m+ r_m\a_1
        \end{pmatrix}.
    \end{equation}

    The notation $\begin{pmatrix} \Lambda \\ \r \end{pmatrix}$ can be extended in an obvious way. Let $\r^1,\,\dotsc,\,\r^k$ be marked row vectors with mutually distinct markings. By relabeling the vertices of $K$, we may assume that $\r^v = (\r^v,v)$ has marking $v$ for $v=1,\dotsc,k$. Then we define
    \[
        \begin{pmatrix} \Lambda \\ \r^1 \\ \vdots \\ \r^k \end{pmatrix}
        := \begin{pmatrix}
          \a_1      & 0 &     \a_2  & 0 & \cdots & \a_k  & 0 &  \a_{k+1}  & \cdots & \a_m \\
          -1+r^1_1  & 1 &     r^1_2 & 0 & \cdots & r^1_k & 0 &  r^1_{k+1} & \cdots & r^1_m \\
          r^2_1     & 0 & -1+r^2_2  & 1 & \cdots & r^2_k & 0 &  r^2_{k+1} & \cdots & r^2_m \\
          \vdots    &   &           &   & \ddots & \vdots&   &  \vdots    & \ddots &       \\
          r^k_1     & 0 &    r^k_2  & 0 & \cdots & -1+r^k_k&1& r^k_{k+1}  & \cdots & r^k_m
        \end{pmatrix}.
    \]
    Again, if the above matrix defines a characteristic map over $K(J)$, where the first $k$ coordinates of $J = (2,\dots,2,1,\dotsc,1)$ are $2$ and the remaining coordinates are $1$, the matrix is a standard form if $\r^1,\dotsc,\r^k$ are reduced.

    \begin{lemma}\label{lem:cubefacet}
        In the above settings, we have the following identity:
        \[
            \proj_{k_1}\begin{pmatrix} \Lambda \\ \r^1 \\ \vdots \\ \r^k \end{pmatrix} =
            \begin{pmatrix} \Lambda^{\r^k} \\ (\r^1)^{\r^k} \\ \vdots \\ (\r^{k-1})^{\r^k} \end{pmatrix},
        \]
        where $\r^\s = (\r,v)^{(\s,w)}$ is a marked row vector in which marking $v$ has the defining property
        \[
            \proj_{w_1}\begin{pmatrix} \Lambda \\ \r \\ \s \end{pmatrix} = \begin{pmatrix}\Lambda^\s \\ \r^\s      \end{pmatrix}.
        \]
    \end{lemma}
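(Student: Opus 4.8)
The plan is to evaluate both sides of the asserted identity by a single application of the formula \eqref{eqn:lambdar}, after a harmless normalization. Since characteristic maps are only considered up to Davis--Januszkiewicz equivalence, and each marked row vector $\r^\s$ is only determined up to the relation $\sim_{\Lambda^\s}$, I may assume at the outset that the markings of $\r^1,\dots,\r^k$ are $1,\dots,k$ and that each $\r^i$ is reduced, i.e.\ $r^i_i=0$; the reduction is the same divisibility argument as in the proof of Lemma~\ref{lem:edge}, using that $\a_v$ is primitive. With this normalization $\begin{pmatrix}\Lambda\\ \r^1\\ \vdots\\ \r^k\end{pmatrix}$ is literally the standard form displayed just above the lemma, a matrix over $K(J)$ with $J=(2,\dots,2,1,\dots,1)$ ($k$ twos), and its $(n+k)$th row carries a $-1$ in the column $k_1$, so that the column vector at $k_1$ is $(\a_k,\,r^1_k,\dots,r^{k-1}_k,\,-1)$.

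I would then peel off the last wedge. Put $N:=\begin{pmatrix}\Lambda\\ \r^1\\ \vdots\\ \r^{k-1}\end{pmatrix}$, a characteristic map over $K(J')$ with $J'=(2,\dots,2,1,\dots,1)$ ($k-1$ twos), in which the vertex $k$ is not split; it is immediate from the minimal non-face description of wedges that $K(J)=\wed_k K(J')$ and, reading off the blocks, that $\begin{pmatrix}\Lambda\\ \r^1\\ \vdots\\ \r^k\end{pmatrix}=\begin{pmatrix}N\\ \r^k\end{pmatrix}$ (with the obvious extension of the notation to $N$). Consequently $\proj_{k_1}\begin{pmatrix}\Lambda\\ \r^1\\ \vdots\\ \r^k\end{pmatrix}$ is computed by the same recipe as $\Lambda^\r$ in \eqref{eqn:lambdar}, now with $N$ in place of $\Lambda$ and the column $k$ as the anchor: one deletes the column $k_1$, leaves the surviving column $k$ unchanged, and replaces every other column $N(c)$ by $N(c)+\rho_c N(k)$, where $\rho_c$ is the entry of $\r^k$ in column $c$ (thus $\rho_{l_1}=r^k_l$, $\rho_{l_2}=0$, $\rho_l=r^k_l$). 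Concretely this recipe is the isomorphism $\Z^{n+k}/\langle(\a_k,r^1_k,\dots,r^{k-1}_k,-1)\rangle\cong\Z^{n+k-1}$, $[(\mathbf{x},t)]\mapsto\mathbf{x}+t\,(\a_k,r^1_k,\dots,r^{k-1}_k)$, which is available precisely because reducedness of $\r^k$ put a $-1$ in the last coordinate of the $k_1$-column; in particular it acts on each row of $N$ separately.

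The final step is to identify the rows of the result. The first $n$ rows of $N$ are $\Lambda$ spread over the vertices of $K(J')$, so after the column replacement they become $\Lambda^{\r^k}$ spread over $K(J')$---this is \eqref{eqn:lambdar} again, for $\Lambda$ with anchor $\a_k$. For $1\le i\le k-1$, the $(n+i)$th row of $N$ is $\r^i$ spread over $K(J')$ (with $-1$ in column $i_1$, $1$ in column $i_2$), and since the $(n+i)$th entry of $N(k)$ is $r^i_k$, the replacement sends this row to the one whose entry in column $c$ is (the $\r^i$-entry at $c$)$\,+\,\rho_c r^i_k$. On the other hand, running the identical formula on the two-row standard form $\begin{pmatrix}\Lambda\\ \r^i\\ \r^k\end{pmatrix}$ and projecting at $k_1$ gives, by the defining property of the superscript operation on marked row vectors, $\begin{pmatrix}\Lambda^{\r^k}\\ (\r^i)^{\r^k}\end{pmatrix}$, whose last row has exactly the entries (the $\r^i$-entry at $c$)$\,+\,\rho_c r^i_k$; hence the $(n+i)$th row of $\proj_{k_1}\begin{pmatrix}\Lambda\\ \r^1\\ \vdots\\ \r^k\end{pmatrix}$ is the $(\r^i)^{\r^k}$-row. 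Stacking the $n+k-1$ rows yields the identity. The only genuine obstacle is the index bookkeeping---tracking how the columns of $K(J)$, $K(J')$ and $\wed_i K$ match under the wedge identifications, and remembering that the $(\r^i)^{\r^k}$ need not be reduced even though the $\r^i$ and $\r^k$ are---but no new idea beyond \eqref{eqn:lambdar} is required, and what remains is a routine comparison of column vectors.
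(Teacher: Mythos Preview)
Your proposal is correct and follows the same route the paper indicates: assume the $\r^i$ are reduced and carry out the direct computation of the projection. The paper's proof is literally the single sentence ``straightforward calculation once one assumes that $\r^1,\dotsc,\r^k$ are reduced,'' so your argument is a fleshed-out version of exactly that, organized via the helpful observation that the big matrix is $\begin{pmatrix} N \\ \r^k \end{pmatrix}$ for $N=\begin{pmatrix}\Lambda\\ \r^1\\ \vdots\\ \r^{k-1}\end{pmatrix}$, which lets you reuse \eqref{eqn:lambdar} with $N$ in place of $\Lambda$.
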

        To prove the lemma, it is necessary to show that $\r^\s$ is well-defined up to the equivalence relation $\sim_\Lambda$. In the matrix $\begin{pmatrix} \Lambda \\ \r \\ \s \end{pmatrix}$, one can assume that $\r=(\r,1)$ and $\s=(\s,2)$ and that $\r$ and $\s$ are reduced as before. Then
        \[  \begin{pmatrix} \Lambda \\ \r \\ \s \end{pmatrix} =
            \begin{pmatrix}
                \a_1  &  0   &  \a_2  &  0    & \a_3 &  \cdots  &  \a_m \\
                -1    &  1   &  r_2     &  0  & r_3  &  \cdots  &   r_m \\
                s_1     &  0 &  -1    &  1    & s_3  &  \cdots  &   s_m
            \end{pmatrix}.
        \]
        After a series of row operations, the matrix becomes
        \[
            \begin{pmatrix}
            \a_1+s_1\a_2 & 0 & 0 & \a_2 & \a_3+s_3\a_2 & \cdots & \a_m + s_m\a_2 \\
            -1+s_1r_2    & 1 & 0 & r_2  & r_3+s_3r_2   & \cdots & r_m + s_mr_2 \\
                s_1     &  0 &  -1    &  1    & s_3  &  \cdots  &   s_m
            \end{pmatrix},
        \]
        after which the $2_1$th column and the last row are deleted to obtain
        \[
            \proj_{2_1}\begin{pmatrix} \Lambda \\ \r \\ \s \end{pmatrix} =
            \begin{pmatrix}
            \a_1+s_1\a_2 & 0  & \a_2 & \a_3+s_3\a_2 & \cdots & \a_m + s_m\a_2 \\
            -1+s_1r_2    & 1  & r_2  & r_3+s_3r_2   & \cdots & r_m + s_mr_2
            \end{pmatrix}.
        \]
        From \eqref{eqn:lambdar} and the above formula, one concludes that
        \[
            \r^\s = \begin{pmatrix}
                s_1r_2   &   r_2  &  r_3 + s_3r_2   & r_4 + s_4r_2 \cdots  &   r_m + s_mr_2
            \end{pmatrix}.
        \]
        Care should be taken that $\r^\s$ itself is not reduced if $s_1r_2 \ne 0$. If we want $\r^\s$ to be reduced, then we can select a row vector $\r'$ such that $\r'\sim_\Lambda \r^s$.
    \begin{proof}[Proof of Lemma~\ref{lem:cubefacet}]
        The proof is a straightforward calculation once one assumes that $\r^1,\dotsc,\r^k$ are reduced.
    \end{proof}

%

    Now we are ready to prove the following.
    \begin{lemma}\label{thm:squaretocube}
        A cube is realizable if and only if all of its subsquares are realizable.
    \end{lemma}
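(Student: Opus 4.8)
The plan is to prove the nontrivial (``if'') direction by induction on the dimension $d$ of the cube, producing a candidate standard form with Proposition~\ref{prop:standardform} and then verifying its non-singularity by peeling off one wedge direction at a time with Proposition~\ref{prop:wedge1} and Lemma~\ref{lem:cubefacet}. The ``only if'' direction is immediate, since a subsquare of a realizable puzzle is a subpuzzle and hence realizable by Remark~\ref{rem:faceofrealizablepuzzle}. For the converse, after relabeling the colors we may assume the cube is a puzzle $p\colon G(J)\to D'(K)$ with $J=(2,\dotsc,2,1,\dotsc,1)$ whose first $d$ entries equal $2$. The cases $d\le 2$ are trivial: an edge is realizable because every edge of $D'(K)$ is, by definition, the projection data of a characteristic map over some $\wed_vK$, and for $d=2$ the statement is tautological. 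So assume $d\ge 3$ and that the lemma holds for $(d-1)$-cubes. Let $\Lambda_0$ be a characteristic matrix for $p(\1)$ over $K$, let $\r^1,\dotsc,\r^d$ be the reduced standard forms of the edges $e_1^2,\dotsc,e_d^2$ of $p$ at $\1$, and let $\Lambda=\begin{pmatrix}\Lambda_0\\\r^1\\\vdots\\\r^d\end{pmatrix}$ be the matrix built as in \eqref{eqn:standardform} by the construction in the proof of Proposition~\ref{prop:standardform}. We view $K(J)$ as $\wed_{d_1}K(J')$, where $J'$ has $d-1$ entries equal to $2$, with $d_1,d_2$ the new vertices, and let $\1'$ denote the neighbor of $\1$ along $e_d^2$ (which is the base vertex of the facet $\{x_d=2\}$).

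Next I would examine the two projections of $\Lambda$ onto $K(J')$. The projection $\proj_{d_2}\Lambda=\begin{pmatrix}\Lambda_0\\\r^1\\\vdots\\\r^{d-1}\end{pmatrix}$ is exactly the standard form at $\1$ of the subpuzzle $p|_{\{x_d=1\}}$; since all subsquares of that $(d-1)$-cube are subsquares of $p$, the induction hypothesis together with the uniqueness in Proposition~\ref{prop:wedge1} (via Proposition~\ref{prop:standardform}) shows that $\proj_{d_2}\Lambda$ is the realizing characteristic map of $p|_{\{x_d=1\}}$; in particular it is non-singular. For the other projection, Lemma~\ref{lem:cubefacet} gives $\proj_{d_1}\Lambda=\begin{pmatrix}\Lambda_0^{\r^d}\\(\r^1)^{\r^d}\\\vdots\\(\r^{d-1})^{\r^d}\end{pmatrix}$, and this is where the full hypothesis enters. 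For each $v<d$ the subsquare of $p$ in directions $v$ and $d$ is realizable, so by Proposition~\ref{prop:standardform} it is realized by $\begin{pmatrix}\Lambda_0\\\r^v\\\r^d\end{pmatrix}$; projecting this realizing matrix onto the $x_d=2$ edge and comparing with the defining property of $\r^{\s}$ in Lemma~\ref{lem:cubefacet} identifies $\Lambda_0^{\r^d}$ with $p(\1')$ and each $(\r^v)^{\r^d}$ with the standard form, centered at $\1'$, of the edge of $p|_{\{x_d=2\}}$ issuing from $\1'$ in direction $v$. Hence $\proj_{d_1}\Lambda$ is precisely the standard form at $\1'$ of the $(d-1)$-cube $p|_{\{x_d=2\}}$, whose subsquares are again subsquares of $p$; the induction hypothesis then makes $\proj_{d_1}\Lambda$ the (non-singular) realizing characteristic map of $p|_{\{x_d=2\}}$.

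It remains to glue. The column $\Lambda(d_2)$ is a coordinate vector, while the first $n$ entries of $\Lambda(d_1)$ form the column of $\Lambda_0$ at the vertex $d$, which is primitive because $d$ lies in some facet of $K$; hence the $2\times2$ minors of $\bigl(\Lambda(d_1)\mid\Lambda(d_2)\bigr)$ have greatest common divisor $1$ (and for $R=\Z_2$ the two columns are plainly independent), so $\{\Lambda(d_1),\Lambda(d_2)\}$ is unimodular. Proposition~\ref{prop:wedge1} now shows that $\Lambda$ is a characteristic map over $K(J)$. Finally $\Lambda$ realizes $p$: for $\balpha\in I(J)$ with $\alpha_d=1$ the projection $\proj_{\sigma(\balpha)}\Lambda$ factors through $\proj_{d_2}\Lambda$ and equals $p(\balpha)$ by the previous paragraph, and for $\alpha_d=2$ it factors through $\proj_{d_1}\Lambda$ and again equals $p(\balpha)$; thus the puzzle defined by $\Lambda$ is $p$, and the induction is complete.

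I expect the genuine obstacle to be the middle step: showing that $\proj_{d_1}\Lambda$ is not merely some characteristic map over $K(J')$ but the one realizing the opposite facet $p|_{\{x_d=2\}}$. A careless argument that only feeds the two facet subcubes of $p$ into Proposition~\ref{prop:wedge1} would wrongly conclude that \emph{every} square puzzle is realizable, contradicting the example after Theorem~\ref{thm:subcube}; what fixes this is that one must use the realizability of the ``mixed'' subsquares (those involving direction $d$) to pin down the rows $(\r^v)^{\r^d}$ through Lemma~\ref{lem:cubefacet}, so that $\proj_{d_1}\Lambda$ genuinely induces $p|_{\{x_d=2\}}$ and not an unrelated puzzle. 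The remaining parts are routine block-matrix bookkeeping, entirely parallel to the proofs of Lemma~\ref{lem:edge}, Proposition~\ref{prop:standardform} and Lemma~\ref{lem:cubefacet}.
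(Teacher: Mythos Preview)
Your proof is correct and follows essentially the same route as the paper's: induction on the dimension of the cube, construction of the candidate standard form $\Lambda$ from $p(\1)$ and the adjacent edge data, identification of $\proj_{d_2}\Lambda$ and $\proj_{d_1}\Lambda$ with the standard forms of the two opposite facet subcubes via Lemma~\ref{lem:cubefacet} (using the mixed subsquares in directions $v$ and $d$ to pin down the rows $(\r^v)^{\r^d}$), and then an application of Proposition~\ref{prop:wedge1}. Your treatment is in fact slightly more explicit than the paper's in two places---the unimodularity of $\{\Lambda(d_1),\Lambda(d_2)\}$ and the final verification that $\Lambda$ actually realizes $p$---but the overall architecture and the key idea are identical.
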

    \begin{proof}
        In this proof, we assume that $J = (2,\dots,2,1,\dotsc,1)$ has $k$ 2's in its coordinates. The set $I(J)$ of vertices of the corresponding cube is the same as $\{1,2\}^k$. To simplify the notation, we omit the last $(m-k)$ 1's of $\balpha = (\alpha_1,\dotsc,\alpha_k,1,\dots,1) \in I(J)$ and write $\balpha = (\alpha_1,\dotsc,\alpha_k)$.

        For the ``only if'' part, we have a stronger fact that every subpuzzle of a realizable puzzle is again realizable (see Remark~\ref{rem:faceofrealizablepuzzle}).
        For the ``if'' part, we use induction on the dimension $k$ of the cube. When $k=2$, it is a square and thus we are done. For $v=1,\dotsc,k$, let us denote $\h_v = (1,\dotsc,1,2,1,\dotsc,1) \in I(J)$ the vertex of the $k$-dimensional cube whose $v$th coordinate is $2$.
        There are exactly $k$ edges connected to the center vertex $\1$. Recall that every edge of a puzzle produces a characteristic map over a wedge of $K$. Therefore if $p(\1)=\Lambda$, then one can assume that the edge connecting $\1$ and $\h_v$ corresponds to the standard form $\begin{pmatrix} \Lambda \\ \r^v \end{pmatrix}$ where $\r^v$ is a reduced marked row vector with marking $v$ for $v=1,\dotsc,k$. By uniqueness of standard forms, the proof is complete if it is shown that the matrix
        \[
            \phi := \begin{pmatrix} \Lambda \\ \r^1 \\ \vdots \\ \r^k \end{pmatrix}
        \]
        provides a realization of the cube, i.e., it represents a characteristic map. By induction hypothesis, the facet $p_1$ determined by $\1, \h_1,\dotsc,\h_{k-1}$ and its opposite facet $p_2$ are realizable. Let us denote by $\h_{v,k}$ the fourth vertex of the square determined by $\1, \h_v, \h_k$. Then $p_2$ is the cube determined by $\h_k,\h_{1,k},\h_{2,k},\dotsc,\h_{k-1,k}$. We study the edges connecting $\h_{k}$ and $\h_{v,k}$. The square determined by $\1,\h_v, \h_k, \h_{v,k}$ is realizable by the hypothesis and its characteristic map is
        \[
            \phi_v := \begin{pmatrix} \Lambda \\ \r^v \\  \r^k \end{pmatrix}.
        \]
        Then
        \[
            \proj_{k_1}\phi_v = \begin{pmatrix} \Lambda^{\r^k} \\ (\r^v)^{\r^k} \end{pmatrix}
        \]
        is the characteristic map corresponding to the edge connecting $\h_{k}$ and $\h_{v,k}$. Thus the standard form for $p_2$ is
        \[
            \begin{pmatrix} \Lambda^{\r^k} \\ (\r^1)^{\r^k} \\ \vdots \\ (\r^{k-1})^{\r^k} \end{pmatrix},
        \]
        which is exactly $\proj_{k_1}\phi$ by Lemma~\ref{lem:cubefacet}. As we already know, the standard form for $p_1$ is
        \[
            \begin{pmatrix} \Lambda  \\  \r^1  \\ \vdots \\  \r^{k-1} \end{pmatrix}
        \]
        and it is exactly $\proj_{k_2}\phi$. Because the set of column vectors $\{\phi(k_1),\phi(k_2)\}$ is unimodular, we can apply Proposition~\ref{prop:wedge1} to show that $\phi$ is non-singular.
    \end{proof}

    By combining Theorem~\ref{thm:subcube} and Lemma~\ref{thm:squaretocube}, we obtain the main result:
    \begin{theorem}\label{thm:main}
        A puzzle is realizable if and only if all of its subsquares are realizable.
    \end{theorem}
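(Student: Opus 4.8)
The plan is to obtain Theorem~\ref{thm:main} by simply chaining the two statements just proved, Theorem~\ref{thm:subcube} and Lemma~\ref{thm:squaretocube}, together with one elementary observation about the face poset of the polytope $\Delta^{j_1-1}\times\cdots\times\Delta^{j_m-1}$.

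First I would dispose of the ``only if'' direction, which is immediate: if $p\colon G(J)\to D'(K)$ is realizable, then by Remark~\ref{rem:faceofrealizablepuzzle} every subpuzzle of $p$ is realizable, and in particular every subsquare of $p$ is. For the ``if'' direction, assume every subsquare of $p$ is realizable. By Theorem~\ref{thm:subcube}, it is enough to prove that every subcube of $p$ is realizable. So fix a subcube $q=p|_{G'}$, where $G'$ is the $1$-skeleton of a face $F\cong(\Delta^1)^d$ of $\prod_{i=1}^m\Delta^{j_i-1}$. By Lemma~\ref{thm:squaretocube}, $q$ is realizable as soon as all of its subsquares are realizable; but each subsquare of $q$ corresponds to a $2$-dimensional hypercube face of $F$, and a face of a face of $\prod_{i=1}^m\Delta^{j_i-1}$ is again a face of $\prod_{i=1}^m\Delta^{j_i-1}$. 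Hence every subsquare of $q$ is itself a subsquare of $p$, and therefore realizable by hypothesis. Thus $q$ is realizable, and since $q$ was an arbitrary subcube, Theorem~\ref{thm:subcube} yields that $p$ is realizable.

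The only point that requires a little care — and the closest thing here to an obstacle — is the bookkeeping in the last step: one must verify that the set of subsquares of all subcubes of $p$ coincides with the set of all subsquares of $p$, so that the induction fed to Theorem~\ref{thm:subcube} through Lemma~\ref{thm:squaretocube} really only consumes the hypothesis on the subsquares of $p$. This is purely a transitivity statement for the face relation in a product of simplices, and the degenerate directions with $j_v=1$ contribute no squares and cause no difficulty. No new matrix computation or realizability argument is needed beyond the two results already established.
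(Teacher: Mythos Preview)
Your proposal is correct and follows exactly the approach the paper takes: the paper's proof of Theorem~\ref{thm:main} is simply the one-line remark ``By combining Theorem~\ref{thm:subcube} and Lemma~\ref{thm:squaretocube}, we obtain the main result,'' and your write-up spells out precisely this combination, including the transitivity-of-faces observation needed to pass from subsquares of a subcube back to subsquares of the original puzzle.
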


    \begin{definition}
        A subgraph of $D'(K)$ which is the image of a realizable subsquare is called a \emph{realizable square} of $D'(K)$.
        {The \emph{diagram} for $K$, denoted by $D(K)$, is the pre-diagram $D'(K)$ for $K$ equipped with the set of realizable squares of $D'(K)$.} As Definition~\ref{def:prediagram}, a diagram $D(K)$ depends on the choice of category.
    \end{definition}
    \begin{corollary} \label{cor:main_theorem}
        The diagram $D(K)$ provides a complete classification of toric spaces over $K(J)$ for any $J$ in each category.
    \end{corollary}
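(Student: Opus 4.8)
The plan is to assemble the classification from the chain of equivalences already established in the excerpt. The target statement, Corollary~\ref{cor:main_theorem}, is essentially a repackaging of Theorem~\ref{thm:main} together with the bijection between D-J classes over $K(J)$ and realizable puzzles of $(K,J)$. So the first step is to recall that bijection: to every characteristic map $\lambda$ over $K(J)$ we assigned the puzzle $p(\lambda)$ sending the vertex $\balpha \in I(J) \cong V(G(J))$ to the projected characteristic map $\proj_{\sigma(\balpha)}\lambda$, and conversely Proposition~\ref{prop:standardform} (via the standard form centered at $\mathbf 1$) shows that a realizable puzzle determines a characteristic map over $K(J)$ uniquely up to D-J equivalence, using the uniqueness of characteristic maps over wedges from Proposition~\ref{prop:wedge1}. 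Thus D-J classes over $K(J)$ are in bijection with realizable puzzles $G(J) \to D'(K)$.

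Next I would invoke Theorem~\ref{thm:main}: a puzzle $p\colon G(J) \to D'(K)$ is realizable if and only if every subsquare of $p$ is realizable. A subsquare of $p$ is realizable precisely when its image is a realizable square of $D'(K)$ --- and the set of realizable squares is exactly the extra data that upgrades the pre-diagram $D'(K)$ to the diagram $D(K)$. So ``$p$ is realizable'' is a condition that can be checked entirely inside $D(K)$: one enumerates all color-preserving graph homomorphisms $p\colon G(J) \to D'(K)$ (the underlying graph and coloring of $G(J)$ depend only on $J$, and the target $D'(K)$ depends only on $K$ and the chosen category), and keeps those for which the image of every square-shaped subgraph is one of the designated realizable squares of $D(K)$. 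Combining these two steps, the D-J classes over $K(J)$ are in explicit bijection with the realizable puzzles, and realizability is decidable from $D(K)$ alone; since D-J classes in the chosen category correspond to the toric spaces in the matching category (omnioriented topological toric manifolds, almost complex ones, toric manifolds, or real topological toric manifolds, per the dictionary stated before Definition~\ref{def:prediagram}), this yields the complete classification.

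I would write the proof as: by the bijection $\Lambda \mapsto p(\Lambda)$ between D-J classes over $K(J)$ and realizable puzzles of $(K,J)$, it suffices to show that the realizable puzzles of $(K,J)$ are determined by $D(K)$. A puzzle $G(J)\to D'(K)$ is a color-preserving graph homomorphism, which only references the vertex set (D-J classes over $K$), the edge set, and the coloring of $D'(K)$; by Theorem~\ref{thm:main} such a puzzle is realizable if and only if the image of each of its subsquares is a realizable square of $D'(K)$, i.e. belongs to the distinguished set of squares recorded in $D(K)$. Hence the set of realizable puzzles --- and therefore the set of D-J classes over $K(J)$, and therefore the set of toric spaces over $K(J)$ in the chosen category --- is read off from $D(K)$, for every $J$.

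The routine parts are genuinely routine here, since the heavy lifting was done in Theorem~\ref{thm:subcube}, Lemma~\ref{thm:squaretocube}, and Proposition~\ref{prop:standardform}. If there is any subtlety to be careful about, it is the bookkeeping in the bijection: one must confirm that $p(\Lambda)$ really is a puzzle (that edges go to edges --- this is Remark~\ref{rem:faceofrealizablepuzzle}), that $p(\Lambda)$ depends only on the D-J class of $\Lambda$ and not on the representative matrix (projections are defined only up to basis change, which is exactly D-J equivalence), and that the inverse construction from Proposition~\ref{prop:standardform} lands back in the same D-J class; none of this is hard but it is the only place where care is needed. The statement ``$D(K)$ provides a complete classification'' should also be read in the intended sense: $D(K)$ is a finite combinatorial object (by Theorem~\ref{thm:finiteness_of_seeds} there are only finitely many relevant seeds, and for a fixed seed $D(K)$ has finitely many vertices and edges), so for each $J$ the classification reduces to a finite search, which is the practical payoff advertised in the introduction.
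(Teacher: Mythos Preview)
Your proposal is correct and follows exactly the route the paper intends: the corollary is stated without proof in the paper because it is meant to follow immediately from Theorem~\ref{thm:main} together with the bijection $\Lambda \mapsto p(\Lambda)$ between D-J classes over $K(J)$ and realizable puzzles, and you have spelled out precisely that derivation.

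One small inaccuracy in your closing parenthetical: the claim that $D(K)$ is a finite combinatorial object is not justified by Theorem~\ref{thm:finiteness_of_seeds} (which bounds the number of seeds, not the size of $D(K)$ for a fixed $K$), and in fact for $R=\Z$ the set of D-J classes over $K$ --- hence the vertex set of $D(K)$ --- need not be finite. Finiteness of $D(K)$ holds for $R=\Z_2$, but in the $\Z$-categories the corollary should be read as reducing the classification to a combinatorial problem in $D(K)$, not necessarily to a finite search. This does not affect the correctness of your main argument.
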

    \begin{remark}\label{rem:reducible}
        Note that the square of the form
        \[
            \xymatrix{
                    \lambda_{1} \ar@{-}[rr]^v \ar@{-}[dd]^w  & & \lambda_{2} \ar@{-}[dd]^w \\
                      &   &  \\
                    \lambda_{1}\ar@{-}[rr]^v &   & \lambda_{2} }
        \]
        is always realizable because it corresponds to a canonical extension of the edge
        \[
            \xymatrix{
                \lambda_1 \ar@{-}[rr] ^v & & \lambda_2.
            }
        \]
        In general, a reducible puzzle is realizable if and only if
        \begin{enumerate}
            \item every edge parallel to a trivial edge is trivial, and
            \item a maximal irreducible subpuzzle is realizable.
        \end{enumerate}
    \end{remark}

\section{Combinatorics of puzzles} \label{sec:example_of_puzzles}
In this section, we discuss the combinatorial interpretation of realizable puzzles and provide a few examples.
Let $K$ be a star-shaped simplicial sphere on $[m]$ and $J = (j_1, \ldots, j_m) \in \Z_+^m$ an $m$-tuple of positive integers. A realizable puzzle $p \colon G(J) \to D'(K)$ can be interpreted as a way to fill a board with stones by obeying the following rules:
\begin{itemize}
  \item \textbf{Stones} : all characteristic maps $\lambda_1, \ldots, \lambda_\ell$ up to D-J classes over $K$.
  \item \textbf{Board} $G(J)$ : a stone is placed on each vertex of $G(J)$.
  \item \textbf{Rules} $D(K)$ : a diagram $D(K)$ is a pre-diagram $D'(K)$ equipped with the list of realizable squares.
    \begin{enumerate}
      \item Two stones $\lambda_i$ and $\lambda_j$ can be connected on $G(J)$ by an edge colored $v$ only when $\lambda_i$ and $\lambda_j$ are connected in $D'(K)$ by an edge colored $v$.
      \item Any square on $G(J)$ of form
    \[
        \xymatrix{
                \lambda_{1} \ar@{-}[rr]^v \ar@{-}[dd]^w  & & \lambda_{2} \ar@{-}[dd]^w \\
                  &   &  \\
                \lambda_{3}\ar@{-}[rr]^v &   & \lambda_{4} }
    \] is realizable.
    \end{enumerate}
\end{itemize}

Now, let us provide a few examples of puzzles for $R = \Z_2$ which correspond to real topological toric manifolds over $K$. The reader can find other cases in \cite{CP}, for example.

\begin{example}
  Let $K$ be the boundary complex of a cyclic $4$-polytope with $7$ vertices. It is easy to see that $K$ is a seed. Fixing an order of vertices, it is known that there are two $\Z_2$-characteristic maps up to D-J equivalence
    \[
        \lambda_1=\begin{pmatrix}
            1&0&0&0&1&0&1\\
            0&1&0&0&0&1&1\\
            0&0&1&0&1&1&0\\
            0&0&0&1&1&1&1
        \end{pmatrix}\text{  and  }
        \lambda_2=\begin{pmatrix}
            1&0&0&0&1&1&1\\
            0&1&0&0&1&0&1\\
            0&0&1&0&0&1&1\\
            0&0&0&1&1&1&0
        \end{pmatrix}.
    \] It is clear that there is no $\lambda$ on a wedge of $K$ whose two projections are $\lambda_1$ and $\lambda_2$ respectively, so its pre-diagram $D'(K)$ is a discrete graph with two vertices.
    Since it has no edge, for any $J \in \Z_+^7$, each puzzle only contains stones of one particular kind. In conclusion, there are only two puzzles for any $J$.
\end{example}

\begin{example}
Let $K =  \{1,2,3,4,5,6, 12,23,34,45,56, 61\}$ be the boundary complex of a hexagon on $[6]=\{1,\dotsc,6\}$. Then, there are eleven $\Z_2$-characteristic maps, and they can be separated into four types up to their rotational symmetries.

\begin{itemize}
  \item \textbf{TYPE 1 : ababab type} $$\lambda_1 = \begin{pmatrix} 1 & 0 & 1 & 0 & 1 & 0 \\ 0 & 1 & 0 & 1 & 0 & 1 \end{pmatrix}$$
  \item \textbf{TYPE 2 : abcbcb type}
  \begin{align*}
  \lambda_{2,1} = \begin{pmatrix} 1 & 0 & 1 & 0 & 1 & 0 \\ 0 & 1 & 1 & 1 & 1 & 1 \end{pmatrix} \quad
  \lambda_{2,2} = \begin{pmatrix} 1 & 0 & 1 & 1 & 1 & 1 \\ 0 & 1 & 0 & 1 & 0 & 1 \end{pmatrix} \\
  \lambda_{2,3} = \begin{pmatrix} 1 & 0 & 1 & 0 & 1 & 0 \\ 0 & 1 & 1 & 1 & 0 & 1 \end{pmatrix} \quad
  \lambda_{2,4} = \begin{pmatrix} 1 & 0 & 1 & 1 & 1 & 0 \\ 0 & 1 & 0 & 1 & 0 & 1 \end{pmatrix} \\
  \lambda_{2,5} = \begin{pmatrix} 1 & 0 & 1 & 0 & 1 & 0 \\ 0 & 1 & 0 & 1 & 1 & 1 \end{pmatrix} \quad
  \lambda_{2,6} = \begin{pmatrix} 1 & 0 & 1 & 0 & 1 & 1 \\ 0 & 1 & 0 & 1 & 0 & 1 \end{pmatrix}
  \end{align*}
  \item \textbf{TYPE 3 : abcacb type}
  \begin{align*}
  \lambda_{3,1} = \begin{pmatrix} 1 & 0 & 1 & 1 & 1 & 0 \\ 0 & 1 & 1 & 0 & 1 & 1 \end{pmatrix} \quad
  \lambda_{3,2} = \begin{pmatrix} 1 & 0 & 1 & 1 & 0 & 1 \\ 0 & 1 & 0 & 1 & 1 & 1 \end{pmatrix} \\
  \lambda_{3,3} = \begin{pmatrix} 1 & 0 & 1 & 0 & 1 & 1 \\ 0 & 1 & 1 & 1 & 0 & 1 \end{pmatrix}
  \end{align*}
  \item \textbf{TYPE 4 : abcabc type}
  $$
  \lambda_{4} = \begin{pmatrix} 1 & 0 & 1 & 1 & 0 & 1 \\ 0 & 1 & 1 & 0 & 1 & 1 \end{pmatrix}
  $$
\end{itemize}

By Lemma~\ref{lemma:prediagram}, one can obtain the following pre-diagram $D'(K)$
\begin{center}
\tikzstyle{v}=[ inner sep=0pt, minimum width=4pt]
\begin{tikzpicture}[thick,scale=0.5]
    \node [v] (bot32) {$\lambda_{3,2}$};
    \node [v] (bot31) [above of=bot32, node distance=50pt]{$\lambda_{3,1}$} ;
    \node [v] (bot33) [below of=bot32, node distance=50pt]{$\lambda_{3,3}$} ;
    \node [v] (bot4) [right of=bot32, node distance=50pt]{$\lambda_{4}$} ;
    \node [v] (bot21) [above left of=bot32, node distance=100pt]{$\lambda_{2,1}$} ;
    \node [v] (bot25) [below of=bot21]{$\lambda_{2,5}$} ;
    \node [v] (bot23) [below of=bot25]{$\lambda_{2,3}$} ;
    \node [v] (bot24) [below of=bot23]{$\lambda_{2,4}$} ;
    \node [v] (bot22) [below of=bot24]{$\lambda_{2,2}$} ;
    \node [v] (bot26) [below of=bot22]{$\lambda_{2,6}$} ;
    \node [v] (bot1) [left of=bot32, node distance=130pt]{$\lambda_{1}$} ;
    \path
    (bot1) edge [color=red] (bot21)
           edge [color=red] (bot23)
           edge [color=red] (bot25)
           edge [color=blue] (bot22)
           edge [color=blue] (bot24)
           edge [color=blue] (bot26)
    (bot21) edge [color=red] (bot25)
            edge [color=red, bend left] (bot23)
    (bot25) edge [color=red] (bot23)
    (bot24) edge [color=blue] (bot22)
            edge [color=blue, bend left] (bot26)
    (bot22) edge [color=blue] (bot26)
    (bot31) edge node[above] {\blue{\tiny 3,5}} (bot21)
            edge [bend left] node[left]{\red{\tiny 2,6 }} (bot24)
    (bot32) edge [bend left] node[below]{\red{\tiny 4,6 }} (bot22)
            edge node[above] {\blue{\tiny 1,3}} (bot25)
    (bot33) edge node[below] {\blue{\tiny 1,5}} (bot23)
            edge node[below]{\red{\tiny 2,4 }} (bot26)
    (bot4) edge node[above right] {\tiny \blue{1},\red{4}} (bot31)
           edge node[above]{\tiny \red{2},\blue{5}} (bot32)
           edge node[below right ]{\tiny \blue{3},\red{6}} (bot33);
\end{tikzpicture}
\end{center}
where
\begin{itemize}
  \item a red edge represents triple edges each of which is colored $2$,$4$, and $6$ respectively;
  \item a blue edge represents triple edges each of which is colored $1$,$3$, and $5$ respectively; and
  \item an edge with $\{i,j\}$ represents double edges each of which is colored $i$ and $j$ respectively.
\end{itemize}

One can check that the all possible irreducible rectangles in $D'(K)$ are realizable. The following are some examples:
\begin{center}
\tikzstyle{v}=[ inner sep=0pt, minimum width=4pt]
\begin{tikzpicture}[thick,scale=0.5]
    \node [v] (bot1) {$\lambda_{1}$};
    \node [v] (bot21) [right of=bot1, node distance=50pt]{$\lambda_{2,5}$};
    \node [v] (bot31) [below of=bot21, node distance=50pt]{$\lambda_{2,1}$} ;
    \node [v] (bot24) [below of=bot1, node distance=50pt]{$\lambda_{2,3}$} ;
    \path
    (bot1) edge node[above] {\red{\tiny 4}} (bot21)
           edge node[left] {\red{\tiny 6}} (bot24)
    (bot31) edge node[right] {\red{\tiny 6}} (bot21)
            edge node[below] {\red{\tiny 4}} (bot24);
\end{tikzpicture}
\begin{tikzpicture}[thick,scale=0.5]
    \node [v] (bot1) {$\lambda_{1}$};
    \node [v] (bot21) [right of=bot1, node distance=50pt]{$\lambda_{2,3}$};
    \node [v] (bot31) [below of=bot21, node distance=50pt]{$\lambda_{3,3}$};
    \node [v] (bot24) [below of=bot1, node distance=50pt]{$\lambda_{2,6}$} ;
    \path
    (bot1) edge node[above] {\red{\tiny 2}} (bot21)
           edge node[left] {\blue{\tiny 1}} (bot24)
    (bot31) edge node[right] {\blue{\tiny 1}} (bot21)
            edge node[below] {\red{\tiny 2}} (bot24);
\end{tikzpicture}
\begin{tikzpicture}[thick,scale=0.5]
    \node [v] (bot1) {$\lambda_{2,1}$};
    \node [v] (bot21) [right of=bot1, node distance=50pt]{$\lambda_{3,1}$};
    \node [v] (bot31) [below of=bot21, node distance=50pt]{$\lambda_{3,1}.$};
    \node [v] (bot24) [below of=bot1, node distance=50pt]{$\lambda_{2,1}$} ;
    \path
    (bot1) edge node[above] {\blue{\tiny 3}} (bot21)
           edge node[left] {\blue{\tiny 5}}  (bot24)
    (bot31) edge node[right] {\blue{\tiny 5}} (bot21)
            edge node[below] {\blue{\tiny 3}} (bot24);
\end{tikzpicture}
\end{center}
    The following is an example of realizable puzzles on $G(2,3,1,1,1,1)$.
    \[
        \xymatrix{
            \lambda_{2,6} \ar@{-}[rrr]^2 \ar@{-}[rd]^2 \ar@{-}[ddd]^1 & & & \lambda_{2,6} \ar@{-}[ddd]^1 \ar@{-}[dll]^2\\
             & \lambda_{3,3} \ar@{-}[ddd]^1 & & \\
             & & & \\
            \lambda_{1} \ar@{-}[rrr]^2 \ar@{-}[dr]^2 & & & \lambda_{1}\ar@{-}[dll]^2 \\
             & \lambda_{2,3} & &
        }
    \]
    Indeed, the puzzle has two irreducible squares and one reducible square. Since all irreducible squares  are on the list of realizable squares and a reducible square is automatically realizable by Remark~\ref{rem:reducible}, the above puzzle is a realizable puzzle by Theorem~\ref{thm:main}.
    Furthermore, one can check that there are $119$ realizable puzzles on $G(2,3,1,1,1,1)$. We leave it as an exercise.

%
%
%
%
%
\end{example}

We remark that there are finitely many realizable puzzles for given $K$ when $R = \Z_2$; therefore, it would be interesting to enumerate the number of realizable puzzles over $K(J)$.

\begin{question}
  Count the realizable puzzles over $K(J)$ where $R=\Z_2$ and $K$ is an $n$-gonal simplicial complex.
\end{question}

\bigskip

\end{document}